\documentclass{amsart}

\usepackage[T1]{fontenc}
\usepackage{lmodern}
\usepackage{amsmath,amssymb,amsthm,mathtools}
\usepackage[
  backend=biber,
  style=alphabetic,
  sorting=nyt
]{biblatex}

\newtheorem{theorem}{Theorem}
\newtheorem{proposition}[theorem]{Proposition}
\newtheorem{lemma}[theorem]{Lemma}
\newtheorem{corollary}[theorem]{Corollary}

\newtheorem{example}[theorem]{Example}

\theoremstyle{definition}
\newtheorem{definition}[theorem]{Definition}

\theoremstyle{remark}
\newtheorem{remark}[theorem]{Remark}

\newcommand{\Spec}[1]{\operatorname{Spec}\left(#1\right)}
\newcommand{\nat}{\mathbb N}
\newcommand{\reals}{\mathbb R}
\newcommand{\integers}{\mathbb Z}

\newcommand{\Op}[1]{\mathcal{O}\left(#1\right)}
\newcommand{\cont}[1]{C\left(#1\right)}

\newcommand{\KOp}[1]{\mathring{\mathcal{K}}\left(#1\right)}
\newcommand{\linfty}{\ell^\infty}

\title[Compact-open subsets in the spectrum of $\cont{T}$]{The Algebra of compact-open subsets in the spectrum
of the ring $\cont{T}$ for an infinite compact Hausdorff space $T$}

\author{Evgeny Kuznetsov}

\address{Department of Mathematical logic, 
Razmadze Mathematical Institute, 
2 Merab Aleksidze II Lane, Tbilisi 0193, Georgia}

\email{e.kuznetsov@freeuni.edu.ge}

\date{July 2026}

\subjclass[2020]{Primary 13F05, 06D20; Secondary 54G05, 03G10, 54D35}
\keywords{B\'ezout ring; prime spectrum; Heyting algebra; Esakia space;
basically disconnected space; $F$-space; Stone--\v{C}ech
compactification; compact open sets}

\begin{document}

\begin{abstract}
For a commutative B\'ezout ring $R$, we give a criterion --- in terms of
colon ideals with principal radical --- for the lattice $\KOp{\Spec R}$
of compact open subsets of $\Spec R$ to be a Heyting algebra.
Bezhanishvili and Tressl showed that $\KOp{\Spec{\cont T}}$ is
pseudocomplemented whenever $T$ is a basically disconnected compact
Hausdorff space, and asked whether $\Spec{\cont{\beta\nat}}$ is actually
an Esakia space. We show it is not: applying our criterion to
$\cont{\beta\nat}\cong\linfty(\nat,\reals)$ produces a diagonal
counterexample, and the same obstruction rules out $\beta D$ for every
infinite discrete $D$. A grid-existence theorem for $\sigma$-complete
Boolean algebras lets us push the construction to every basically
disconnected compact Hausdorff space, settling the Bezhanishvili--Tressl
question completely: for no infinite compact Hausdorff space $T$ is
$\Spec{\cont T}$ an Esakia space.
\end{abstract}


\maketitle

\section{Problem and status}
\label{sec:problem}

Let
\[
A=\cont{\beta\nat},
\qquad
X=\Spec{A}.
\]
The problem is to determine whether the lattice
\[
\KOp{X}=\{U\subseteq X:U\text{ is compact and open}\}
\]
is a Heyting algebra. Equivalently, one asks whether the spectral space
$X$ is Esakia.

Bezhanishvili and Tressl proved that, whenever $T$ is basically
disconnected, the lattice $\KOp{\Spec{\cont{T}}}$ is pseudocomplemented,
and explicitly left open whether $\Spec{\cont{\beta\nat}}$
is Esakia; see \cite{bezhanishvili-tressl-2026}.

Here is the plan. We first set up the standard reduction, then run a
diagonal construction that turns it into a negative answer;
Theorem~\ref{claim:negative-answer} does this for $A$ itself.
Corollary~\ref{cor:beta-D} shows the same obstruction already rules out
\emph{every} space of the form $\beta D$ with $D$ infinite discrete, not
just $D=\nat$. Section~\ref{sec:full-negative} then closes the remaining
gap: generalizing the diagonal argument to arbitrary basically
disconnected spaces (Section~\ref{sec:beyond-betaD}) and proving that a
countable grid always exists, we answer the Bezhanishvili--Tressl
question completely --- for no infinite compact Hausdorff space $T$ is
the lattice $\KOp{\Spec{\cont{T}}}$ a Heyting algebra.

\section{Algebraic model and compact opens}

\subsection*{Preliminaries on prime spectra}

Recall that $\Spec{R}$ denotes the set of prime ideals of a commutative
ring $R$, and that for $f\in R$,
\[
D(f)=\{\mathfrak p\in\Spec{R}:f\notin\mathfrak p\}
\]
is the associated basic open set; these sets form a basis for the
topology on $\Spec{R}$. For an ideal $I\subseteq R$, the \emph{radical}
of $I$ is
\[
\sqrt I=\{x\in R:x^n\in I\text{ for some }n\geq1\},
\]
and, for $f\in R$, the \emph{colon ideal} (or ideal quotient) of $I$ by
$f$ is
\[
(I:f)=\{a\in R:af\in I\}
\]
(this is again an ideal of $R$). Some standard facts about $D(-)$ are used
repeatedly below.

\begin{lemma}\label{lem:D-intersection}
For $a,f\in R$, $D(a)\cap D(f)=D(af)$.
\end{lemma}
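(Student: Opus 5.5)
The plan is to prove the equality $D(a)\cap D(f)=D(af)$ by showing both inclusions directly from the definition of $D(-)$, which reduces the statement to the defining property of a prime ideal. Fix $\mathfrak p\in\Spec{R}$. By definition, $\mathfrak p\in D(a)\cap D(f)$ means $a\notin\mathfrak p$ and $f\notin\mathfrak p$, while $\mathfrak p\in D(af)$ means $af\notin\mathfrak p$. So it suffices to show, for each prime $\mathfrak p$,
\[
af\in\mathfrak p \iff a\in\mathfrak p\ \text{or}\ f\in\mathfrak p .
\]

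For the direction ``$\Leftarrow$'', I use that $\mathfrak p$ is an ideal: if $a\in\mathfrak p$ then $af=fa\in\mathfrak p$ by absorption (commutativity of $R$ is used here), and symmetrically if $f\in\mathfrak p$. Taking contrapositives, this gives $D(af)\subseteq D(a)\cap D(f)$. For the direction ``$\Rightarrow$'', I invoke the definition of a prime ideal: $af\in\mathfrak p$ forces $a\in\mathfrak p$ or $f\in\mathfrak p$. The contrapositive gives $D(a)\cap D(f)\subseteq D(af)$.

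Combining the two inclusions proves the lemma. There is no real obstacle; the only point that needs care is to keep the conventions straight. Primes are proper ideals, but properness plays no role in this argument. Also, the case where $a$ or $f$ is a unit or nilpotent needs no separate treatment, since the equivalence holds for every prime $\mathfrak p$ regardless of what $a$ and $f$ are.
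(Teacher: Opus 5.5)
Your proof is correct and follows essentially the same route as the paper. Both fix a prime $\mathfrak p$, use ideal absorption to get $D(af)\subseteq D(a)\cap D(f)$, and use primeness in contrapositive form for the reverse inclusion.
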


\begin{proof}
Fix a prime $\mathfrak p$. If $af\notin\mathfrak p$, then $a\in\mathfrak
p$ would force $af=a\cdot f\in\mathfrak p$ (ideals absorb multiplication
by ring elements), so $a\notin\mathfrak p$; likewise $f\notin\mathfrak
p$. Conversely, if $a\notin\mathfrak p$ and $f\notin\mathfrak p$,
primeness of $\mathfrak p$ --- in the contrapositive form $x\notin
\mathfrak p,\,y\notin\mathfrak p\Rightarrow xy\notin\mathfrak p$ --- gives
$af\notin\mathfrak p$. So $af\notin\mathfrak p\iff a\notin\mathfrak
p\text{ and }f\notin\mathfrak p$, i.e.\ $\mathfrak p\in D(af)\iff
\mathfrak p\in D(a)\cap D(f)$.
\end{proof}

\begin{lemma}\label{lem:D-inclusion}
For $x,y\in R$, $D(x)\subseteq D(y)$ if and only if $x\in\sqrt{(y)}$.
\end{lemma}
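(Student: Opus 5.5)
The plan is to prove the two implications separately. The key standard input is the description of the nilradical of a localization: the primes avoiding a multiplicative set $S$ correspond to primes of $S^{-1}R$, and their intersection consists of the elements that become nilpotent in $S^{-1}R$. Equivalently, we will use the classical fact that for an ideal $I$ and the multiplicative set $S=\{1,y,y^2,\dots\}$, if $S\cap I=\emptyset$ then some prime contains $I$ and misses $S$. This is proved by Zorn's lemma, taking an ideal maximal among those containing $I$ and disjoint from $S$, and checking that it is prime. We take this as standard background.

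($\Leftarrow$) Suppose $x\in\sqrt{(y)}$, so $x^n=ry$ for some $n\ge1$ and $r\in R$. Take $\mathfrak p\in D(x)$, i.e.\ $x\notin\mathfrak p$. Primeness, applied repeatedly, gives $x^n\notin\mathfrak p$. If $y$ were in $\mathfrak p$, then $ry\in\mathfrak p$ by absorption, which is a contradiction. Hence $y\notin\mathfrak p$, so $\mathfrak p\in D(y)$.

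($\Rightarrow$) We argue by contrapositive. Assume $x\notin\sqrt{(y)}$, so no power $x^n$ with $n\ge 1$ lies in $(y)$. Note also that $x^0=1\notin(y)$: otherwise $(y)=R$ and every power of $x$ would lie in it. So the multiplicative set $S=\{x^n:n\ge0\}$ is disjoint from the ideal $(y)$. By the Zorn argument, some prime $\mathfrak p$ contains $(y)$ and is disjoint from $S$. Then $x\notin\mathfrak p$ and $y\in\mathfrak p$, so $\mathfrak p\in D(x)\setminus D(y)$, and therefore $D(x)\not\subseteq D(y)$.

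The only nontrivial step is producing the prime, i.e.\ the Zorn's-lemma lemma. If it is not simply cited, we verify it as follows. The union of a chain of ideals containing $(y)$ and missing $S$ again has these properties, so Zorn gives a maximal such ideal $\mathfrak p$. Suppose $a,b\notin\mathfrak p$. By maximality, $\mathfrak p+(a)$ meets $S$ and $\mathfrak p+(b)$ meets $S$, say $x^i=p_1+ra$ and $x^j=p_2+sb$ with $p_1,p_2\in\mathfrak p$. Multiplying gives $x^{i+j}\in\mathfrak p+(ab)$. Since $x^{i+j}\in S$ and $S\cap\mathfrak p=\emptyset$, this forces $ab\notin\mathfrak p$, so $\mathfrak p$ is prime.
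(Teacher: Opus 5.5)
Your proof is correct and follows essentially the same route as the paper. The easy direction pushes $x^n=ry$ through a prime, and the converse uses Zorn's lemma to extend $(y)$ to a prime avoiding $\{1,x,x^2,\dots\}$; you additionally verify the primality step and the edge case $1\notin(y)$, which the paper leaves as standard.
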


\begin{proof}
$D(x)\subseteq D(y)$ says every prime not containing $x$ fails to
contain $y$; contrapositively, every prime containing $y$ contains $x$,
i.e.\ $x\in\bigcap_{\mathfrak p\supseteq(y)}\mathfrak p$. It is a
standard fact that $\sqrt{(y)}=\bigcap_{\mathfrak p\supseteq(y)}\mathfrak
p$: if $x\in\sqrt{(y)}$ then $x^n\in(y)$ for some $n$, and any prime
$\mathfrak p\supseteq(y)$ contains $x^n$, hence contains $x$ (primes
absorb powers); conversely, if no power of $x$ lies in $(y)$, the
multiplicative set $\{1,x,x^2,\dots\}$ is disjoint from $(y)$, so $(y)$
extends to a prime ideal still disjoint from that set (a standard
Zorn's-lemma argument), exhibiting a prime containing $y$ but not $x$.
The claim follows.
\end{proof}

For an ideal $I\subseteq R$, write
\[
D(I)=\bigcup_{a\in I}D(a).
\]

\begin{lemma}\label{lem:D-equality}
For ideals $I,J\subseteq R$, $D(I)=D(J)$ if and only if $\sqrt I=\sqrt J$.
\end{lemma}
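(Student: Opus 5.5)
The plan is to show that both $D(I)$ and $\sqrt I$ are determined by the same closed set $V(I)=\{\mathfrak p\in\Spec R:\mathfrak p\supseteq I\}$. Once that is done, the equivalence is immediate.

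First, I will identify the complement of $D(I)$. A prime $\mathfrak p$ lies outside $D(I)=\bigcup_{a\in I}D(a)$ exactly when $a\in\mathfrak p$ for every $a\in I$, that is, when $\mathfrak p\supseteq I$. Hence
\[
\Spec R\setminus D(I)=V(I).
\]
It follows that $D(I)=D(J)$ holds if and only if $V(I)=V(J)$.

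Second, I will show that $\sqrt I=\bigcap_{\mathfrak p\in V(I)}\mathfrak p$. This is the ideal version of the fact already proved inside Lemma~\ref{lem:D-inclusion}, and the argument carries over word for word.
\begin{itemize}
\item[($\subseteq$)] If $x^n\in I\subseteq\mathfrak p$, then $x\in\mathfrak p$ because $\mathfrak p$ is prime.
\item[($\supseteq$)] If no power of $x$ lies in $I$, then $S=\{1,x,x^2,\dots\}$ is disjoint from $I$. A maximal ideal containing $I$ and disjoint from $S$, obtained by Zorn's lemma, is prime. This prime lies in $V(I)$ and does not contain $x$.
\end{itemize}

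Both directions now follow.
\begin{itemize}
\item[($\Leftarrow$)] If $\sqrt I=\sqrt J$, I first note that $V(I)=V(\sqrt I)$. Indeed, $I\subseteq\sqrt I$ gives $V(\sqrt I)\subseteq V(I)$. Conversely, a prime containing $I$ contains every $x$ with some $x^n\in I$, so it contains $\sqrt I$. Therefore $V(I)=V(\sqrt I)=V(\sqrt J)=V(J)$, and so $D(I)=D(J)$.
\item[($\Rightarrow$)] If $D(I)=D(J)$, then $V(I)=V(J)$. The intersection formula from the second step then gives $\sqrt I=\sqrt J$.
\end{itemize}

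I expect no real obstacle here. The only nontrivial input is the Zorn's-lemma construction of a prime avoiding a multiplicative set. That is the same standard fact already invoked in Lemma~\ref{lem:D-inclusion}, so I would cite that argument rather than repeat it.
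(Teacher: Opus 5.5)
Your proof is correct and is essentially the paper's argument. Both rest on the single fact $\sqrt I=\bigcap_{\mathfrak p\supseteq I}\mathfrak p$: you organize it through the closed set $V(I)=\Spec{R}\setminus D(I)$, while the paper uses it element-wise as $D(a)\subseteq D(J)\iff a\in\sqrt J$ and then passes from $I\subseteq\sqrt J$ to $\sqrt I\subseteq\sqrt J$. One wording fix: in the Zorn step you want an ideal that is maximal among those containing $I$ and disjoint from $S$, not a maximal ideal of $R$, since the latter need not avoid $S$ (e.g.\ in a local ring with $x$ in the maximal ideal but not nilpotent).
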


\begin{proof}
The argument of Lemma~\ref{lem:D-inclusion} did not use that $(y)$ was
principal: for any ideal $J$ and any $a\in R$, the same reasoning gives
$D(a)\subseteq D(J)\iff a\in\sqrt J$.

If $\sqrt I=\sqrt J$: for $a\in I\subseteq\sqrt I=\sqrt J$, this gives
$D(a)\subseteq D(J)$; taking the union over $a\in I$ gives
$D(I)\subseteq D(J)$, and symmetrically $D(J)\subseteq D(I)$, so
$D(I)=D(J)$.

Conversely, if $D(I)=D(J)$: for $a\in I$, $D(a)\subseteq D(I)=D(J)$, so
$a\in\sqrt J$; hence $I\subseteq\sqrt J$, and since taking radicals is
monotone and idempotent, $\sqrt I\subseteq\sqrt{\sqrt J}=\sqrt J$.
Symmetrically $\sqrt J\subseteq\sqrt I$, so $\sqrt I=\sqrt J$.
\end{proof}

\begin{definition}
  For a set $D$, 
  the space $\linfty{(D,\reals)}$ is the space given by
\[\linfty{(D,\reals)} = 
\left\{
f\in \reals^D
: 
\sup_{d\in D}|f(d)|<\infty
\right\}\]
\end{definition}

Restriction to the dense discrete subspace $\nat\subseteq\beta\nat$ gives the
standard isomorphism
\[
\cont{\beta\nat}\cong\linfty(\nat,\reals);
\]
see \cite[6.5(II), 6.6(b)]{GillmanJerison} for the general fact underlying
this (for a Tychonoff space $X$, restriction identifies $\cont{\beta X}$
with the ring $C^\ast(X,\reals)$ of bounded continuous functions on $X$),
specialized here to the discrete space $X=\nat$, where every function is
continuous, so $C^\ast(\nat,\reals)=\linfty(\nat,\reals)$.

\begin{lemma}\label{lem:beta-nat-iso}
The restriction map $\rho:\cont{\beta\nat}\to\linfty(\nat,\reals)$,
$f\mapsto f|_\nat$, is a ring isomorphism.
\end{lemma}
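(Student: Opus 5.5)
The plan is to check four things in turn: $\rho$ is well defined, it is a ring homomorphism, it is injective, and it is surjective. Only surjectivity needs a real construction.

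\emph{Well defined.} For $f\in\cont{\beta\nat}$, the space $\beta\nat$ is compact, so $f$ is bounded. Hence $\sup_{n\in\nat}|f(n)|\le\max_{\beta\nat}|f|<\infty$, and $f|_\nat\in\linfty(\nat,\reals)$.

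\emph{Homomorphism.} Restriction commutes with pointwise addition and multiplication and sends the constant $1$ to the constant $1$. So $\rho$ is a unital ring homomorphism.

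\emph{Injectivity.} Suppose $f|_\nat=g|_\nat$. Then $\{x\in\beta\nat: f(x)=g(x)\}$ is closed, since $f-g$ is continuous and $\reals$ is Hausdorff. This set contains $\nat$, which is dense in $\beta\nat$, so it is all of $\beta\nat$ and $f=g$.

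\emph{Surjectivity.} This is the main step. Given bounded $h:\nat\to\reals$ with $|h|\le M$, I would regard $h$ as a continuous map from the discrete space $\nat$ into the compact Hausdorff space $[-M,M]$. The universal property of the Stone--\v{C}ech compactification then gives a continuous extension $\beta h:\beta\nat\to[-M,M]\subseteq\reals$ with $\rho(\beta h)=h$.

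If one prefers to avoid the universal property, one can build the extension from ultrafilters. Model $\beta\nat$ as the space of ultrafilters on $\nat$. For an ultrafilter $p$, define $\tilde h(p)$ as the unique $r\in[-M,M]$ such that $\{n: |h(n)-r|<\varepsilon\}\in p$ for every $\varepsilon>0$. Existence of $r$ comes from compactness of $[-M,M]$: repeatedly bisect the interval and keep the half whose preimage lies in $p$. Uniqueness comes from Hausdorffness. For continuity at $p$, fix $\varepsilon>0$ and let $A=\{n:|h(n)-\tilde h(p)|<\varepsilon\}\in p$; every $q$ in the basic open set $\widehat A=\{q: A\in q\}$ satisfies $|\tilde h(q)-\tilde h(p)|\le\varepsilon$. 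Finally, on principal ultrafilters $\tilde h$ agrees with $h$.

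This is exactly the content of the Gillman--Jerison citation above. The only real obstacle is surjectivity, and I would dispose of it by invoking \cite[6.5(II), 6.6(b)]{GillmanJerison}, or equivalently the universal property of $\beta\nat$, rather than reproving it.
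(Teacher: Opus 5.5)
Your proof is correct and follows the paper's argument step for step. Boundedness comes from compactness, injectivity holds because the equalizer of $f$ and $g$ is closed and contains the dense set $\nat$, and surjectivity follows from the universal property of $\beta\nat$ applied to $h:\nat\to[-M,M]$. Your optional ultrafilter construction of the extension is also sound, though the paper does not need it.
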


\begin{proof}
$\rho$ is well defined and a ring homomorphism: $\beta\nat$ is compact, so
any continuous $f:\beta\nat\to\reals$ has bounded (indeed compact) image,
hence $f|_\nat\in\linfty(\nat,\reals)$; restriction preserves pointwise
sums, products, and the constant $1$.

$\rho$ is injective. If $f,g\in\cont{\beta\nat}$ agree on $\nat$,
the set $\{x\in\beta\nat:f(x)=g(x)\}$ is the preimage of the diagonal
$\Delta\subseteq\reals\times\reals$ under the continuous map
$(f,g):\beta\nat\to\reals\times\reals$; since $\reals$ is Hausdorff,
$\Delta$ is closed, so this equalizer is a closed subset of $\beta\nat$
containing the dense subset $\nat$, hence equals $\beta\nat$. So $f=g$.

$\rho$ is surjective. This is the universal property characterizing
$\beta\nat$: every continuous map from $\nat$ into a compact Hausdorff
space extends (uniquely) to a continuous map from $\beta\nat$. Given
$a\in\linfty(\nat,\reals)$, view $a$ as a continuous map
$\nat\to[-\|a\|_\infty,\|a\|_\infty]$ (a compact Hausdorff space; every
map out of the discrete space $\nat$ is continuous); the extension
property gives $f\in\cont{\beta\nat}$ with $f|_\nat=a$, i.e.\
$\rho(f)=a$.

A bijective ring homomorphism is a ring isomorphism.
\end{proof}

From now on we identify $A$ with $\linfty(\nat,\reals)$, via
Lemma~\ref{lem:beta-nat-iso}. In particular, every bounded real sequence
represents an element of $A$.

\begin{lemma}\label{lem:bezout-linfty}
Every finitely generated ideal of $A=\linfty(\nat,\reals)$ is principal.
More precisely, for $a_1,\dots,a_n\in A$, if
\[
k=|a_1|+\cdots+|a_n|,
\]
then
\[
(a_1,\dots,a_n)=(k).
\]
\end{lemma}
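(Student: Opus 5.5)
We prove the two inclusions $(k)\subseteq(a_1,\dots,a_n)$ and $(a_1,\dots,a_n)\subseteq(k)$ separately, working coordinatewise in $\linfty(\nat,\reals)$. The only constraint is that every multiplier we write down must itself be a bounded sequence.

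For $(k)\subseteq(a_1,\dots,a_n)$, it suffices to write $k$ as an $A$-combination of the $a_i$. Define $s_i\in\reals^\nat$ by $s_i(m)=\operatorname{sgn}(a_i(m))$, with the convention $\operatorname{sgn}(0)=0$. Each $s_i$ takes values in $\{-1,0,1\}$, so $s_i\in A$. Moreover $s_i(m)a_i(m)=|a_i(m)|$ for every $m$. Hence $k=\sum_i s_ia_i\in(a_1,\dots,a_n)$.

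For $(a_1,\dots,a_n)\subseteq(k)$, we show that each $a_i$ is a multiple of $k$. Define $q_i(m)=a_i(m)/k(m)$ when $k(m)\neq0$, and $q_i(m)=0$ when $k(m)=0$.
\begin{itemize}
\item Since $|a_i(m)|\le k(m)$, we get $|q_i(m)|\le1$, so $q_i\in A$.
\item If $k(m)\neq0$, then $q_i(m)k(m)=a_i(m)$ directly.
\item If $k(m)=0$, then all $|a_j(m)|$ vanish, because they are nonnegative and sum to $0$. So $a_i(m)=0=q_i(m)k(m)$.
\end{itemize}
Thus $a_i=q_ik\in(k)$ for each $i$. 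Since $(k)$ is an ideal, it contains the whole ideal $(a_1,\dots,a_n)$.

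The two inclusions give $(a_1,\dots,a_n)=(k)$. The claim that every finitely generated ideal is principal follows immediately, since $k=|a_1|+\dots+|a_n|$ is a single element of $A$.

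The only point needing care is boundedness of the multipliers $q_i$. A naive quotient $a_i/k$ is neither defined nor bounded in general, for instance in $\cont{[0,1]}$. Here it works because we are in $\linfty$, where there is no continuity requirement, and because the choice $k=\sum_j|a_j|$ dominates each $|a_i|$ pointwise. The zero set of $k$ is handled by the observation that $k$ vanishes exactly where all the $a_j$ vanish.
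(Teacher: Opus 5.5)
Your proof is correct and follows the paper's argument: the same sign multipliers $s_i$ give $k=\sum_i s_ia_i\in(a_1,\dots,a_n)$, and the same truncated quotients $a_i/k$ (set to $0$ where $k$ vanishes, bounded by $1$) give $a_i\in(k)$. Your explicit check that every $a_i$ vanishes wherever $k$ does is a detail the paper leaves implicit.
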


\begin{proof}
For each $i$, define the bounded sequence
\[
u_i(m)=
\begin{cases}
\dfrac{a_i(m)}{k(m)},&k(m)\neq0,\\[1ex]
0,&k(m)=0.
\end{cases}
\]
Since $|u_i(m)|\leq1$, one has $u_i\in A$, and $a_i=u_i k$. Hence
\[
(a_1,\dots,a_n)\subseteq(k).
\]
Conversely, for each $i$ define
\[
s_i(m)=
\begin{cases}
\dfrac{|a_i(m)|}{a_i(m)},&a_i(m)\neq0,\\[1ex]
0,&a_i(m)=0.
\end{cases}
\]
Then $s_i\in A$ and $|a_i|=s_i a_i$. Thus $k\in(a_1,\dots,a_n)$, so
\[
(k)\subseteq(a_1,\dots,a_n).
\]
\end{proof}

\begin{lemma}\label{lem:compact-opens-principal}
Every compact open subset of $\Spec{A}$ is of the form $D(f)$ for some
$f\in A$.
\end{lemma}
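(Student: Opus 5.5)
The plan is to use compactness to reduce an arbitrary compact open set to a finite union of basic opens, and then collapse that union into a single basic open using the B\'ezout property of Lemma~\ref{lem:bezout-linfty}.

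Let $U\subseteq\Spec{A}$ be compact and open. Since the sets $D(a)$ form a basis of the topology, $U$ is a union of basic open sets $D(a)$ contained in $U$. By compactness, finitely many of them suffice, so
\[
U=D(a_1)\cup\cdots\cup D(a_n)
\]
for some $a_1,\dots,a_n\in A$. If $U=\emptyset$, we may simply take $f=0$, since $D(0)=\emptyset$; so assume $n\geq1$.

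Next, I would identify the finite union with $D(I)$ for the ideal $I=(a_1,\dots,a_n)$. One inclusion is immediate, because each $a_i\in I$ gives $D(a_i)\subseteq D(I)$. For the other, an element of $I$ has the form $x=\sum r_ia_i$. If a prime $\mathfrak p$ contains every $a_i$, then it contains $x$, so $D(x)\subseteq\bigcup_i D(a_i)$. Hence $D(I)=\bigcup_i D(a_i)=U$.

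By Lemma~\ref{lem:bezout-linfty}, $I=(k)$ with $k=|a_1|+\cdots+|a_n|$. Then $D(I)=D((k))$. Every element of $(k)$ is a multiple $rk$, and Lemma~\ref{lem:D-intersection} gives $D(rk)=D(r)\cap D(k)\subseteq D(k)$. Conversely, $D(k)$ is one of the sets in the union, since $k\in(k)$. Thus $D((k))=D(k)$, and $U=D(k)$, which is the required form with $f=k$.

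There is no real obstacle here. The only point needing care is the identification $D(a_1)\cup\cdots\cup D(a_n)=D((a_1,\dots,a_n))$, and that is just the elementary absorption argument above. Alternatively, it follows from Lemma~\ref{lem:D-equality} by observing that the radical of the ideal generated by the $a_i$ is the same whichever finite generating set is used.
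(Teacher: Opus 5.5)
Your proof is correct and follows the paper's own argument. You use compactness to get a finite union of basic opens, identify that union with $D((a_1,\dots,a_n))$, and collapse it to $D(k)$ via Lemma~\ref{lem:bezout-linfty}. You also spell out the steps the paper leaves implicit: the identifications $\bigcup_i D(a_i)=D((a_1,\dots,a_n))$ and $D((k))=D(k)$, and the empty case.
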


\begin{proof}
Every compact open subset of an affine spectrum is a finite union of standard
opens. Hence it has the form
\[
D(f_1)\cup\cdots\cup D(f_n)=D((f_1,\dots,f_n)).
\]
By Lemma~\ref{lem:bezout-linfty}, the ideal $(f_1,\dots,f_n)$ is principal,
say $(f_1,\dots,f_n)=(f)$. Therefore the compact open is $D(f)$.
\end{proof}

Thus
\[
\KOp{X}=\{D(f):f\in A\}.
\]

\section{Implication and the general B\'ezout criterion}

We first record the relation between implication in all opens and implication
among compact opens.

\begin{lemma}\label{lem:implications-coincide}
Let $Y$ be a spectral space. If $\KOp{Y}$ is a Heyting algebra, then for all
$U,V\in\KOp{Y}$,
\[
U\Rightarrow_{\KOp{Y}}V
=
U\Rightarrow_{\Op{Y}}V.
\]
In particular, the implication computed in $\Op{Y}$ is compact open.
\end{lemma}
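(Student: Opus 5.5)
The plan is to compare the two implications directly, using only that compact opens form a basis of $Y$ closed under finite intersections; this holds for any spectral space. Write $W=U\Rightarrow_{\Op{Y}}V$, the largest open set $O$ with $O\cap U\subseteq V$; explicitly, $W$ is the union of all such opens. Write $H=U\Rightarrow_{\KOp{Y}}V$, the largest compact open $K$ with $K\cap U\subseteq V$; it exists by hypothesis. The goal is to show $H=W$.

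First I will show $H\subseteq W$. This is immediate: $H$ is open and $H\cap U\subseteq V$, so $H$ is one of the opens whose union is $W$.

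Next I will show $W\subseteq H$, and this is the only step with content. Since $Y$ is spectral, the compact opens form a basis, so $W$ is the union of the compact opens $K$ contained in $W$. Each such $K$ satisfies $K\cap U\subseteq W\cap U\subseteq V$. Because $K$ is itself a compact open, the defining property of the Heyting implication in $\KOp{Y}$ gives $K\subseteq H$. Taking the union over all such $K$ yields $W\subseteq H$.

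Hence $H=W$, and in particular the implication computed in $\Op{Y}$ is compact open. The only possible obstacle is remembering that we need the basis of compact opens, not their closure under intersection. Note that $K\cap U$ need not be checked for compactness, since only the inclusion $K\cap U\subseteq V$ is used.
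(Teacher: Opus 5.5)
Your proof is correct and is essentially the paper's argument with the names $H$ and $W$ swapped. You get one inclusion from the adjunction in $\Op{Y}$, and the reverse inclusion by covering the $\Op{Y}$-implication with compact opens and applying the adjunction in $\KOp{Y}$. One small correction to your closing remark: reading the adjunction in $\KOp{Y}$ as ``$K\cap U\subseteq V$'' tacitly uses that compact opens of a spectral space are closed under finite intersections, so that the lattice meet is $\cap$ — the paper relies on this in the same way.
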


\begin{proof}
Put
\[
W=U\Rightarrow_{\KOp{Y}}V,
\qquad
H=U\Rightarrow_{\Op{Y}}V.
\]
Since $W\cap U\subseteq V$, the defining adjunction in $\Op{Y}$ gives
$W\subseteq H$.

Conversely, let $y\in H$. Compact opens form a basis of a spectral space, so
there exists $C\in\KOp{Y}$ with
\[
y\in C\subseteq H.
\]
Then $C\cap U\subseteq V$, and the defining adjunction in $\KOp{Y}$ gives
$C\subseteq W$. Hence $y\in W$. Therefore $H\subseteq W$.
\end{proof}

\begin{proposition}\label{prop:frame-implication}
Let $R$ be a commutative ring and let $Y=\Spec{R}$. For all $f,g\in R$,
\[
D(f)\Rightarrow_{\Op{Y}}D(g)
=
D\bigl((\sqrt{(g)}:f)\bigr),
\]
where
\[
(\sqrt{(g)}:f)=\{a\in R:af\in\sqrt{(g)}\}.
\]
\end{proposition}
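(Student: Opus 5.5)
Recall that in the frame $\Op{Y}$ the implication $U\Rightarrow V$ is the largest open set $H$ with $H\cap U\subseteq V$, equivalently the union of all open $O$ with $O\cap U\subseteq V$. Since the basic opens $D(a)$ form a basis, $H$ is the union of those $D(a)$ with $D(a)\cap D(f)\subseteq D(g)$. The plan is to show that, for $a\in R$,
\[
D(a)\cap D(f)\subseteq D(g)\iff a\in(\sqrt{(g)}:f).
\]
Taking the union over all such $a$ then gives $H=\bigcup_{a\in(\sqrt{(g)}:f)}D(a)=D\bigl((\sqrt{(g)}:f)\bigr)$ by the definition of $D(I)$.

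For the equivalence, I will use Lemma~\ref{lem:D-intersection} to rewrite $D(a)\cap D(f)=D(af)$. Then Lemma~\ref{lem:D-inclusion} gives $D(af)\subseteq D(g)$ if and only if $af\in\sqrt{(g)}$, and this is exactly membership $a\in(\sqrt{(g)}:f)$. The two inclusions for $H$ then follow directly. If $a$ lies in the colon ideal, then $D(a)$ is an open set meeting $D(f)$ inside $D(g)$, so $D(a)\subseteq H$. Conversely, every point of $H$ lies in some basic $D(a)\subseteq H$, and that $D(a)$ satisfies $D(a)\cap D(f)\subseteq H\cap D(f)\subseteq D(g)$.

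I should also note that $(\sqrt{(g)}:f)$ is an ideal, since $\sqrt{(g)}$ is an ideal; this matters only because $D(-)$ was defined for ideals, and in fact the union formula does not need it.

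There is no real obstacle here. The only point needing care is that the implication in $\Op{Y}$ is the union of all open sets $O$ with $O\cap D(f)\subseteq D(g)$, and that it suffices to take basic opens. This holds because any open set is a union of basic opens, each of which inherits the condition $O\cap D(f)\subseteq D(g)$.
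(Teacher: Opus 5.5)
Your proposal is correct and matches the paper's proof: reduce to basic opens $D(a)$, then chain Lemma~\ref{lem:D-intersection}, Lemma~\ref{lem:D-inclusion}, and the definition of the colon ideal, and take the union over $a$. Your explicit two-inclusion check for $H$ just spells out the step the paper compresses into ``since the standard opens form a basis.''
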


\begin{proof}
Since the standard opens form a basis, the implication is the union of all
$D(a)$ such that
\[
D(a)\cap D(f)\subseteq D(g).
\]
For $a\in R$,
\[
\begin{aligned}
D(a)\cap D(f)\subseteq D(g)
&\iff D(af)\subseteq D(g)\\
&\iff af\in\sqrt{(g)}\\
&\iff a\in(\sqrt{(g)}:f).
\end{aligned}
\]
(The first equivalence is Lemma~\ref{lem:D-intersection}; the second is
Lemma~\ref{lem:D-inclusion}; the third is the definition of the colon
ideal.) Taking the union over all such $a$ yields the formula.
\end{proof}

\begin{definition}\label{def:bezout-ring}
A commutative ring $R$ is a \emph{B\'ezout ring} if every finitely
generated ideal of $R$ is principal.
\end{definition}

\begin{theorem}\label{thm:bezout-criterion}
Let $R$ be a commutative B\'ezout ring. Then $\KOp{\Spec{R}}$ is a Heyting
algebra if and only if, for every $f,g\in R$, the ideal
\[
(\sqrt{(g)}:f)
\]
has principal radical; that is, there exists $k\in R$ such that
\[
\sqrt{(\sqrt{(g)}:f)}=\sqrt{(k)}.
\]
\end{theorem}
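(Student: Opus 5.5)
The plan is to identify $\KOp{\Spec R}$ with the principal opens and then compare implications computed in $\KOp{\Spec R}$ with those computed in $\Op{\Spec R}$, using Lemma~\ref{lem:implications-coincide} and Proposition~\ref{prop:frame-implication}.

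First, I record that for a B\'ezout ring $R$ every compact open of $Y=\Spec R$ has the form $D(f)$. The argument of Lemma~\ref{lem:compact-opens-principal} works verbatim: a compact open is a finite union $D(f_1)\cup\dots\cup D(f_n)=D((f_1,\dots,f_n))$, and the finitely generated ideal $(f_1,\dots,f_n)$ is principal. Conversely each $D(f)$ is compact open. So $\KOp{Y}=\{D(f):f\in R\}$.

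For the forward direction, assume $\KOp{Y}$ is Heyting and fix $f,g$. By Lemma~\ref{lem:implications-coincide}, $D(f)\Rightarrow_{\Op{Y}}D(g)$ is compact open, so it equals $D(k)$ for some $k$. By Proposition~\ref{prop:frame-implication} it also equals $D\bigl((\sqrt{(g)}:f)\bigr)$. Applying Lemma~\ref{lem:D-equality} to the ideals $(\sqrt{(g)}:f)$ and $(k)$ gives $\sqrt{(\sqrt{(g)}:f)}=\sqrt{(k)}$.

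For the converse, assume the radical condition. Given $U=D(f)$ and $V=D(g)$ in $\KOp{Y}$, pick $k$ as in the hypothesis. Proposition~\ref{prop:frame-implication} and Lemma~\ref{lem:D-equality} give $D(f)\Rightarrow_{\Op{Y}}D(g)=D(k)$, which is compact open. I then check that $W=D(k)$ is the relative pseudocomplement in $\KOp{Y}$. For $C\in\KOp{Y}$, we have $C\cap U\subseteq V$ if and only if $C\subseteq D(k)$, by the adjunction in $\Op{Y}$, since $C$ is in particular open. Also $\KOp{Y}$ is a bounded sublattice of $\Op{Y}$: intersections of compact opens are compact open by Lemma~\ref{lem:D-intersection}, and $\emptyset=D(0)$, $Y=D(1)$. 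So every pair has an implication, and $\KOp{Y}$ is a Heyting algebra.

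The only step needing care is that every compact open is some $D(f)$, which is the spectral-space fact that compact opens are finite unions of basic opens. The rest is bookkeeping with the adjunction, so I don't expect a real obstacle.
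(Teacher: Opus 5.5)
Your proposal is correct and follows the paper's proof essentially step for step. In the forward direction you use Lemma~\ref{lem:implications-coincide}, Proposition~\ref{prop:frame-implication} and Lemma~\ref{lem:D-equality}; in the converse you show that $D(k)=D(f)\Rightarrow_{\Op{Y}}D(g)$ is compact open and satisfies the adjunction in $\KOp{Y}$. Your one addition is the explicit check that $\KOp{Y}$ is a bounded sublattice of $\Op{Y}$, which the paper leaves implicit.
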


\begin{proof}
Put $Y=\Spec{R}$. Since $R$ is B\'ezout, every compact open subset of $Y$ is
principal.

Suppose first that $\KOp{Y}$ is Heyting. For $f,g\in R$,
Lemma~\ref{lem:implications-coincide} and
Proposition~\ref{prop:frame-implication} show that
\[
D\bigl((\sqrt{(g)}:f)\bigr)
\]
is compact open. Hence it equals $D(k)$ for some $k\in R$. By
Lemma~\ref{lem:D-equality},
\[
D(I)=D(J)\iff\sqrt I=\sqrt J,
\]
so we obtain
\[
\sqrt{(\sqrt{(g)}:f)}=\sqrt{(k)}.
\]

Conversely, assume that every ideal $(\sqrt{(g)}:f)$ has principal radical.
Let $U,V\in\KOp{Y}$. Write $U=D(f)$ and $V=D(g)$. Choose $k\in R$ such that
\[
\sqrt{(\sqrt{(g)}:f)}=\sqrt{(k)}.
\]
Then Proposition~\ref{prop:frame-implication} gives
\[
U\Rightarrow_{\Op{Y}}V
=D\bigl((\sqrt{(g)}:f)\bigr)
=D(k)\in\KOp{Y}.
\]
For every $W\in\KOp{Y}$,
\[
W\subseteq D(k)
\iff
W\cap U\subseteq V.
\]
Thus $D(k)$ satisfies the defining adjunction for implication in $\KOp{Y}$.
Therefore $\KOp{Y}$ is a Heyting algebra.
\end{proof}

\begin{remark}
The phrase ``has principal radical'' means
\[
\sqrt I=\sqrt{(k)}
\]
for some $k$; it does not assert that the radical ideal $\sqrt I$ itself is
principal.
\end{remark}

Applying Theorem~\ref{thm:bezout-criterion} to
$A=\cont{\beta\nat}\cong\linfty(\nat,\reals)$ reduces the original problem to
whether every ideal
\[
(\sqrt{(g)}:f)
\]
has principal radical.

\section{Proposed counterexample}
\label{sec:proposed-counterexample}

Fix a bijection
\[
\nat\cong\nat_{>0}\times\nat_{>0}
\]
and write elements of $A$ as bounded double sequences $a(j,t)$.
Define
\[
f(j,t)=e^{-t},
\qquad
g(j,t)=e^{-jt},
\]
and put
\[
J=(\sqrt{(g)}:f).
\]
For $j\geq1$, let
\[
B_j=\{j\}\times\nat_{>0},
\qquad
e_j=\chi_{B_j}.
\]
Here $B_j\subseteq\nat_{>0}\times\nat_{>0}$ is the $j$-th row of the grid, and
$\chi_{B_j}\in A=\linfty(\nat,\reals)$ denotes its characteristic
function, i.e.\ the bounded sequence
\[
\chi_{B_j}(j',t)=
\begin{cases}
1 & j'=j,\\
0 & j'\neq j,
\end{cases}
\]
so $e_j$ is the element of $A$ equal to $1$ on row $j$ and $0$ on every
other row. Since $\chi_{B_j}$ takes only the values $0,1$, it is an
idempotent of $A$ ($e_j^2=e_j$); this is what ``row idempotent'' below
refers to. The same $\chi_{(-)}$ notation is used throughout for
characteristic functions of subsets of the relevant space (e.g.\
$\chi_S$, $\chi_{E_n}$, $\chi_{[b_j]}$).

\subsection{Row idempotents}

\begin{lemma}\label{lem:row-idempotents}
For every $j\geq1$, one has $e_j\in J$.
\end{lemma}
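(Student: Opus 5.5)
We must show that $e_j f\in\sqrt{(g)}$. The plan is to exhibit an explicit power of $e_j f$ that is a multiple of $g$ in $A$, using only the coordinatewise structure of $A=\linfty(\nat,\reals)$ as identified via Lemma~\ref{lem:beta-nat-iso}.

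First compute $e_j f$ pointwise. It vanishes off row $j$ and equals $e^{-t}$ at $(j,t)$. Hence $(e_jf)^j$ vanishes off row $j$ and equals $e^{-jt}=g(j,t)$ on row $j$. In other words,
\[
(e_jf)^j=e_j\,g,
\]
which follows from the idempotence $e_j^j=e_j$ together with the identity $f^j=g$ on row $j$. Since $e_j\in A$ is bounded, this gives $(e_jf)^j\in(g)$, so $e_jf\in\sqrt{(g)}$. By the definition of the colon ideal, $e_j\in(\sqrt{(g)}:f)=J$.

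There is essentially no obstacle here. The only point to check is that the multiplier $e_j$ is a legitimate element of $A$, which holds because it is a bounded $0/1$ sequence. The identity $(e_jf)^j=e_jg$ is then a coordinatewise verification on and off row $j$.
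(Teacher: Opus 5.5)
Your proof is correct and is essentially the paper's own argument. Both use the idempotence of $e_j$ and the fact that $f^j=g$ on row $j$ to get $(e_jf)^j=e_jf^j=e_jg\in(g)$, hence $e_jf\in\sqrt{(g)}$ and so $e_j\in J$.
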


\begin{proof}
Since $g=f^j$ ($j$-th ring power, pointwise) on the $j$-th row,
\[
(e_jf)^j=e_jf^j=e_jg\in(g).
\]
(Here $f^j(j',t)=e^{-jt}$ for every row $j'$, since it is the $j$-th
power of the single element $f$; this coincides with $g(j',t)=e^{-j't}$
exactly when $j'=j$, i.e.\ exactly on row $j$ --- which is the only row
where $e_j$ is nonzero, so $e_jf^j=e_jg$ holds even though $f^j\neq g$
globally.)
Hence $e_jf\in\sqrt{(g)}$, and therefore $e_j\in J$.
\end{proof}

\subsection{Consequences of hypothetical compactness}

Assume, toward a contradiction, that $D(J)$ is compact. Since
\[
D(J)=\bigcup_{a\in J}D(a),
\]
there exist $a_1,\dots,a_n\in J$ such that
\[
D(J)=D(a_1)\cup\cdots\cup D(a_n).
\]
Set
\[
k=|a_1|+\cdots+|a_n|.
\]
By Lemma~\ref{lem:bezout-linfty},
\[
(a_1,\dots,a_n)=(k),
\]
and hence
\begin{equation}\label{eq:radical-k}
D(J)=D(k),
\qquad
\sqrt J=\sqrt{(k)}.
\end{equation}

\begin{lemma}\label{lem:row-lower-bound}
For every $j\geq1$,
\[
\delta_j:=\inf_{t\geq1}|k(j,t)|>0.
\]
\end{lemma}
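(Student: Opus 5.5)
Since $e_j\in J$ by Lemma~\ref{lem:row-idempotents}, we have $e_j\in J\subseteq\sqrt J=\sqrt{(k)}$ by \eqref{eq:radical-k}. Hence some power of $e_j$ lies in $(k)$. Because $e_j$ is idempotent, $e_j^m=e_j$, so $e_j\in(k)$. This gives $u\in A$ with $e_j=uk$.

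Now evaluate pointwise on row $j$. For every $t\geq1$ we have $1=e_j(j,t)=u(j,t)\,k(j,t)$. Hence $|k(j,t)|=1/|u(j,t)|\geq 1/\|u\|_\infty$. The bound is valid because $u$ is a bounded sequence, and $\|u\|_\infty>0$ since $u(j,t)\neq0$. Taking the infimum over $t$ gives $\delta_j\geq 1/\|u\|_\infty>0$.

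The only step needing care is passing from membership in the radical to membership in $(k)$, which idempotency handles. Alternatively, one can argue with a power directly: if $e_j=e_j^m=vk$, the same pointwise bound applies.
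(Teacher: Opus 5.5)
Your proposal is correct and follows essentially the same route as the paper's proof. It places $e_j$ in $\sqrt{(k)}$, uses idempotence to get $e_j=b_jk$ with $b_j\in A$, and then evaluates on row $j$ to obtain $|k(j,t)|\geq 1/\|b_j\|_\infty$ uniformly in $t$.
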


\begin{remark}
In other words, along any single row $B_j$, $k(j,t)$ never gets
arbitrarily close to $0$ as $t$ varies --- it stays at least $\delta_j$
away from $0$, uniformly in $t$. The bound $\delta_j$ is allowed to
depend on (and shrink with) $j$; only the row-by-row statement is
claimed. This uniform-in-$t$ lower bound is what will let us safely
divide by $k$ along each row when constructing the element $q$ in the
proof of Lemma~\ref{lem:h-in-k} below.
\end{remark}

\begin{proof}
By Lemma~\ref{lem:row-idempotents}, $e_j\in J$; and $J\subseteq\sqrt
J=\sqrt{(k)}$ by~\eqref{eq:radical-k}.
Hence $e_j\in\sqrt{(k)}$, i.e.\ (by definition of the radical)
\[
e_j^m\in(k)\qquad\text{for some }m\geq1.
\]
Since $e_j=\chi_{B_j}$ takes only the values $0,1$, it is idempotent,
so $e_j^m=e_j$ for every $m\geq1$; combined with the displayed
membership, this gives $e_j\in(k)$ itself, i.e.\ there exists $b_j\in A$
with
\[
e_j=b_jk.
\]
This is an identity of functions on all of $\nat_{>0}\times\nat_{>0}$;
evaluate it at the points $(j,t)$, $t\geq1$, making up the row $B_j$.
Since $e_j\equiv1$ on $B_j$, this gives
\[
1=b_j(j,t)\,k(j,t)\qquad\text{for every }t\geq1,
\]
so in particular $k(j,t)\neq0$ and $k(j,t)=1/b_j(j,t)$. As
$b_j\in A=\linfty(\nat,\reals)$, we have $|b_j(j,t)|\leq\|b_j\|_\infty$
for every $t$, hence
\[
|k(j,t)|=\frac{1}{|b_j(j,t)|}\geq\frac{1}{\|b_j\|_\infty}
\]
for all $t\geq1$. Taking the infimum over $t$,
\[
\delta_j\geq\frac{1}{\|b_j\|_\infty}>0
\]
(the right-hand side is a well-defined positive number since $b_j\neq0$,
as $b_jk=e_j\neq0$), proving the claim.
\end{proof}

\subsection{The diagonal element}

For each $j$, choose $t_j\geq1$ such that
\[
t_j>j+\max\{0,-\log\delta_j\}.
\]
Let
\[
S=\{(j,t_j):j\geq1\},
\qquad
h=\chi_Sf.
\]

\begin{lemma}\label{lem:h-in-k}
The element $h$ belongs to $(k)$.
\end{lemma}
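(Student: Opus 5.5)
We show directly that $h=qk$ for an explicit bounded sequence $q\in A$. Since $h=\chi_S f$ vanishes off $S$, set
\[
q(j,t)=
\begin{cases}
\dfrac{e^{-t_j}}{k(j,t_j)}, & t=t_j,\\[1ex]
0, & \text{otherwise}.
\end{cases}
\]
The denominator is nonzero because Lemma~\ref{lem:row-lower-bound} gives $|k(j,t_j)|\ge\delta_j>0$. By construction $qk=h$ pointwise: at $(j,t_j)$ both sides equal $e^{-t_j}=f(j,t_j)$, and off $S$ both sides vanish. So $h\in(k)$ follows once we know $q\in A$, i.e.\ that $q$ is bounded.

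The boundedness of $q$ is the only real step, and it is exactly where the choice of $t_j$ is used. For each $j$,
\[
|q(j,t_j)|=\frac{e^{-t_j}}{|k(j,t_j)|}\le\frac{e^{-t_j}}{\delta_j}=e^{-t_j-\log\delta_j}.
\]
Because $t_j>j+\max\{0,-\log\delta_j\}\ge j-\log\delta_j$, the exponent satisfies $-t_j-\log\delta_j<-j$. Hence $|q(j,t_j)|\le e^{-j}\le 1$.

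Thus $\|q\|_\infty\le1$, so $q\in\linfty(\nat,\reals)=A$, and $h=qk\in(k)$. We expect no genuine obstacle. The only care needed is in the bookkeeping: we divide by $k$ only at the points of $S$, where Lemma~\ref{lem:row-lower-bound} guarantees $k\neq0$, and the row-dependent lower bound $\delta_j$ is absorbed by the decay $e^{-t_j}$ through the choice of $t_j$.
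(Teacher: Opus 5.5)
Your proposal is correct and matches the paper's proof: you use the same quotient $q=h/k$ supported on $S$ and the same estimate $|q(j,t_j)|\le e^{-t_j}/\delta_j<e^{-j}$, which comes from the choice of $t_j$ and the row lower bound of Lemma~\ref{lem:row-lower-bound}. The paper writes $q$ as $h/k$ where $k\neq0$ and $0$ elsewhere, but since $h$ vanishes off $S$ this is the same function as yours.
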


\begin{remark}
Recall $k=|a_1|+\cdots+|a_n|$ is the single generator of the (assumed
finitely generated) ideal $J$, produced from the hypothetical
compactness of $D(J)$; see~\eqref{eq:radical-k}. Since
$(k)\subseteq\sqrt{(k)}=\sqrt J$, this lemma is exactly what is needed
to place the diagonal element $h$ inside $\sqrt J$: we exhibit an
explicit ``quotient'' $q=h/k$ and check $q\in A$, i.e.\ that $q$ is a
\emph{bounded} function on $\nat_{>0}\times\nat_{>0}$. The only place
division by $k$ is needed is on $S$ (since $h$ vanishes off $S$), and
there $|k|$ is bounded below by $\delta_j$
(Lemma~\ref{lem:row-lower-bound}); the specific choice of $t_j$ made
just above was rigged precisely to keep $|q|$ bounded (indeed decaying)
despite $\delta_j$ possibly shrinking with $j$.
\end{remark}

\begin{proof}
Define
\[
q(j,t)=
\begin{cases}
\dfrac{h(j,t)}{k(j,t)},&k(j,t)\neq0,\\[1ex]
0,&k(j,t)=0.
\end{cases}
\]

\emph{$q$ vanishes off $S$.} If $(j,t)\notin S$ then $h(j,t)=0$
(since $h=\chi_Sf$ is supported on $S$), so $q(j,t)=0$ whether or not
$k(j,t)=0$: in the first case of the definition, $q(j,t)=0/k(j,t)=0$;
in the second, $q(j,t)=0$ by definition.

\emph{Bounding $q$ on $S$.} Fix $j\geq1$ and consider the single point
$(j,t_j)\in S$. By Lemma~\ref{lem:row-lower-bound}, $|k(j,t_j)|\geq
\delta_j>0$ (the infimum defining $\delta_j$ ranges over all $t$, in
particular $t=t_j$); so $k(j,t_j)\neq0$ and we are in the first case of
the definition of $q$. Since $(j,t_j)\in S$, $h(j,t_j)=f(j,t_j)=e^{-t_j}$
(as $h=\chi_Sf$ agrees with $f$ on $S$). Hence
\[
|q(j,t_j)|
=\frac{|h(j,t_j)|}{|k(j,t_j)|}
=\frac{e^{-t_j}}{|k(j,t_j)|}
\leq\frac{e^{-t_j}}{\delta_j}.
\]
By the choice of $t_j$ (namely $t_j>j+\max\{0,-\log\delta_j\}$, which
rearranges to $e^{-t_j}<e^{-j}\delta_j$), this gives
\[
|q(j,t_j)|
\leq\frac{e^{-t_j}}{\delta_j}
<e^{-j}.
\]

\emph{Conclusion.} Combining the two cases, $|q(j,t)|<e^{-j}\leq e^{-1}$
whenever $(j,t)\in S$, and $q=0$ off $S$; so $q$ is bounded, i.e.\
$q\in A=\linfty(\nat,\reals)$. Moreover $h=qk$ pointwise: on $S$ this
holds by the very definition of $q$ (where $k\neq0$), and off $S$ both
sides vanish. Hence
\[
h=qk\in(k)\subseteq\sqrt{(k)}=\sqrt J,
\]
using~\eqref{eq:radical-k} for the last equality.
\end{proof}

\subsection{The diagonal-growth obstruction}

\begin{lemma}\label{lem:h-not-radJ}
The element $h$ does not belong to $\sqrt J$.
\end{lemma}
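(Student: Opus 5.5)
We argue directly from the definitions, without using the compactness hypothesis. Suppose $h\in\sqrt J$, so $h^m\in J$ for some $m\geq1$, i.e.\ $h^mf\in\sqrt{(g)}$. Then there is $N\geq1$ with $(h^mf)^N\in(g)$, so some $c\in A$ satisfies
\[
h^{mN}f^N=cg .
\]
We evaluate this identity at the diagonal points $(j,t_j)\in S$, where $h=f=e^{-t_j}$ and $g=e^{-jt_j}$. This gives
\[
e^{-(m+1)N t_j}=c(j,t_j)\,e^{-jt_j},
\]
and hence
\[
|c(j,t_j)|=e^{(j-(m+1)N)t_j}.
\]

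We take $j>(m+1)N$. The exponent $(j-(m+1)N)t_j$ is then at least $t_j>j$, by the choice $t_j>j+\max\{0,-\log\delta_j\}\geq j$. So $|c(j,t_j)|\geq e^{j}$, and letting $j\to\infty$ shows $c$ is unbounded. This contradicts $c\in A=\linfty(\nat,\reals)$, so $h\notin\sqrt J$.

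The only delicate point is unwinding the nested radicals correctly. From $h\in\sqrt J$ we get a power $h^m$ in $J=(\sqrt{(g)}:f)$. Membership in $\sqrt{(g)}$ then gives a second power $N$, with $(h^mf)^N\in(g)$. Both exponents are fixed before $j$ is chosen, so the row index $j$ eventually beats $(m+1)N$. This is exactly the "diagonal-growth" mechanism: $g$ decays faster along row $j$ than any fixed power of $f$ as $j$ grows. Once the exponents are fixed, the rest is the pointwise computation above. Combined with Lemma~\ref{lem:h-in-k}, which gives $h\in\sqrt J$ under the compactness assumption, this yields the desired contradiction.
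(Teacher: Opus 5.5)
Your proof is correct and is essentially the paper's argument. You unwind $h\in\sqrt J$ into an identity $(h^rf)^m=cg$ whose exponents are fixed in advance, evaluate it at the diagonal points $(j,t_j)$ to get $|c(j,t_j)|=e^{(j-m(r+1))t_j}$, and then use only $t_j>j$ to contradict the boundedness of $c$.
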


\begin{proof}
Assume that $h\in\sqrt J$. Then $h^r\in J$ for some $r\geq1$. By the
definition of $J$,
\[
h^rf\in\sqrt{(g)}.
\]
Hence $(h^rf)^m\in(g)$ for some $m\geq1$, so there exists $c\in A$ with
\[
(h^rf)^m=cg.
\]
At the point $(j,t_j)$,
\[
|c(j,t_j)|
=
\frac{e^{-m(r+1)t_j}}{e^{-jt_j}}
=
e^{(j-m(r+1))t_j}.
\]
For $j>m(r+1)$, the exponent is positive; moreover $t_j>j$. Therefore
these values are unbounded as $j\to\infty$, contradicting $c\in\linfty$.
Thus $h\notin\sqrt J$.
\end{proof}

Lemmas~\ref{lem:h-in-k} and~\ref{lem:h-not-radJ} contradict one another under
the assumption that $D(J)$ is compact. Therefore $D(J)$ is not compact.

\begin{theorem}\label{claim:negative-answer}
For
\[
A=\cont{\beta\nat}\cong\linfty(\nat,\reals),
\]
the lattice $\KOp{\Spec{A}}$ is not a Heyting algebra.
\end{theorem}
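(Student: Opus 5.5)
The theorem is now essentially an assembly of the preceding lemmas, and I will prove it by contradiction through the B\'ezout criterion. By Lemma~\ref{lem:bezout-linfty}, $A$ is a B\'ezout ring, so Theorem~\ref{thm:bezout-criterion} applies. Suppose $\KOp{\Spec{A}}$ were Heyting. Then, for the specific elements $f(j,t)=e^{-t}$ and $g(j,t)=e^{-jt}$ of Section~\ref{sec:proposed-counterexample}, the ideal $J=(\sqrt{(g)}:f)$ would have principal radical. Equivalently, by Lemma~\ref{lem:implications-coincide} and Proposition~\ref{prop:frame-implication}, the open set $D(J)=D(f)\Rightarrow_{\Op{X}}D(g)$ would be compact open.

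Compactness of $D(J)$ is exactly the hypothesis of the preceding subsection. Writing $D(J)$ as the finite union $D(a_1)\cup\dots\cup D(a_n)$ with $a_i\in J$ and setting $k=|a_1|+\cdots+|a_n|$, we obtain~\eqref{eq:radical-k}. One point deserves a line of justification here: $D(J)=D((a_1,\dots,a_n))=D(k)$ by Lemma~\ref{lem:bezout-linfty}, and then Lemma~\ref{lem:D-equality} converts this to $\sqrt J=\sqrt{(k)}$.

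With~\eqref{eq:radical-k} in hand, the argument runs in order:
\begin{itemize}
\item Lemma~\ref{lem:row-idempotents} places every row idempotent $e_j$ in $J$.
\item Lemma~\ref{lem:row-lower-bound} then gives the row-wise lower bounds $\delta_j>0$ for $|k|$.
\item These bounds permit the choice of the diagonal points $t_j$ and of $h=\chi_Sf$.
\item Lemma~\ref{lem:h-in-k} gives $h\in(k)\subseteq\sqrt J$.
\item Lemma~\ref{lem:h-not-radJ} gives $h\notin\sqrt J$.
\end{itemize}
The last two statements contradict each other, so $D(J)$ is not compact. Hence the implication $D(f)\Rightarrow D(g)$ does not exist in $\KOp{X}$, since by Lemma~\ref{lem:implications-coincide} it would have to coincide with the non-compact frame implication. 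Therefore $\KOp{X}$ is not a Heyting algebra.

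The only step needing care is confirming that the $t_j$ satisfy both requirements at once. Lemma~\ref{lem:h-in-k} needs $e^{-t_j}<e^{-j}\delta_j$, and Lemma~\ref{lem:h-not-radJ} needs $t_j>j$ so that $e^{(j-m(r+1))t_j}$ is unbounded. The single choice $t_j>j+\max\{0,-\log\delta_j\}$ gives both. I also want to make sure the identification $A\cong\cont{\beta\nat}$ of Lemma~\ref{lem:beta-nat-iso} transports the conclusion, which it does because isomorphic rings have homeomorphic spectra. Once these points are checked, no real obstacle remains.
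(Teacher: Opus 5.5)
Your proposal is correct and matches the paper's proof. Both assume $D(J)=D(f)\Rightarrow_{\Op{X}}D(g)$ is compact, run Lemmas~\ref{lem:row-idempotents}--\ref{lem:h-not-radJ} to reach the contradiction $h\in\sqrt J$ versus $h\notin\sqrt J$, and conclude via Lemma~\ref{lem:implications-coincide}. Your two extra checks are also accurate: that $D(J)=D(k)$ gives $\sqrt J=\sqrt{(k)}$ by Lemma~\ref{lem:D-equality}, and that the single choice $t_j>j+\max\{0,-\log\delta_j\}$ serves both Lemma~\ref{lem:h-in-k} and Lemma~\ref{lem:h-not-radJ}.
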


\begin{proof}
For the elements $f$ and $g$ above,
\[
D(f)\Rightarrow_{\Op{\Spec{A}}}D(g)=D(J)
\]
by Proposition~\ref{prop:frame-implication}. The preceding argument shows
that $D(J)$ is not compact. By Lemma~\ref{lem:implications-coincide},
$\KOp{\Spec{A}}$ cannot be a Heyting algebra.
\end{proof}

\subsection{Generalization: every \(\beta D\) fails}

Nothing above actually used that the whole index set was countable ---
only that a countable grid could be found inside it. So the same
argument covers every $\beta D$ with $D$ infinite discrete, not just
$D=\nat$: enlarging the index set does not rescue a positive answer to
the Bezhanishvili--Tressl question.

\begin{corollary}\label{cor:beta-D}
Let $D$ be an infinite set. Then
\[
\KOp{\Spec{\cont{\beta D}}}
\]
is not a Heyting algebra.
\end{corollary}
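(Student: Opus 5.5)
The plan is to repeat the diagonal argument of Section~\ref{sec:proposed-counterexample} verbatim, with $\nat$ replaced by $D$. First, exactly as in Lemma~\ref{lem:beta-nat-iso}, restriction gives a ring isomorphism $\cont{\beta D}\cong\linfty(D,\reals)$. Density of $D$ in $\beta D$ gives injectivity, and the universal property of $\beta D$ applied to maps $D\to[-\|a\|_\infty,\|a\|_\infty]$ gives surjectivity; nothing in that proof used countability. Lemma~\ref{lem:bezout-linfty} also holds word for word for $\linfty(D,\reals)$, since the $u_i$ and $s_i$ are defined pointwise and are bounded by $1$. So $\linfty(D,\reals)$ is B\'ezout, and by Lemma~\ref{lem:implications-coincide} and Proposition~\ref{prop:frame-implication} it suffices to exhibit $f,g$ for which $D\bigl((\sqrt{(g)}:f)\bigr)$ is not compact.

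Since $D$ is infinite, choose a countably infinite subset $N\subseteq D$ and a bijection $N\cong\nat_{>0}\times\nat_{>0}$. Define $f(j,t)=e^{-t}$ and $g(j,t)=e^{-jt}$ on $N$, and set $f=g=1$ on $D\setminus N$ (any values making the argument harmless off the grid would do). Take the row idempotents $e_j=\chi_{B_j}$ as before, which are zero off $N$. Lemma~\ref{lem:row-idempotents} goes through because $e_jf^j=e_jg$ holds pointwise on all of $D$. Now assume $D(J)$ is compact and pick $k=|a_1|+\dots+|a_n|$ with $\sqrt J=\sqrt{(k)}$. The proof of Lemma~\ref{lem:row-lower-bound} uses only evaluation on the row $B_j$, so it again yields $\delta_j>0$.

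Next, choose $t_j$ as before, set $S=\{(j,t_j)\}\subseteq N$ and $h=\chi_Sf$. The quotient $q=h/k$ on $S$, extended by $0$ elsewhere in $D$, is bounded by $e^{-1}$, so $h\in(k)\subseteq\sqrt J$ as in Lemma~\ref{lem:h-in-k}. Lemma~\ref{lem:h-not-radJ} is a pure evaluation at the points $(j,t_j)$: any $c$ with $(h^rf)^m=cg$ would be unbounded along $S$. This contradiction shows $D(J)$ is not compact, and hence $\KOp{\Spec{\cont{\beta D}}}$ is not Heyting.

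The main point to check carefully is the transfer of the isomorphism $\cont{\beta D}\cong\linfty(D,\reals)$; this is standard (Gillman--Jerison). The other thing to confirm is that the choice of values of $f,g$ off $N$ causes no trouble. I would simply note that every membership and nonmembership statement above is either witnessed by elements supported on $N$, or refuted by evaluation at points of $N$, so the values on $D\setminus N$ never enter. Alternatively, one can set $f=g=0$ off $N$. Then $e^{-jt}$ versus $e^{-t}$ is unaffected, and the identities $0=c\cdot0$ off $N$ hold trivially.
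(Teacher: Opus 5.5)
Your proposal is correct and follows the paper's proof essentially verbatim. You transfer the isomorphism $\cont{\beta D}\cong\linfty(D,\reals)$ and the B\'ezout lemma, place the countable grid inside a countably infinite $N\subseteq D$, and rerun Lemmas~\ref{lem:row-idempotents}--\ref{lem:h-not-radJ} by pointwise evaluation on the grid; the only difference is that the paper extends $f,g$ by $0$ off the grid rather than by $1$, and, as you observe, either choice works because no step ever uses values at points of $D\setminus N$.
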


\begin{proof}
As in Section~\ref{sec:problem}, restriction to the dense discrete
subspace $D\subseteq\beta D$ gives
$B:=\cont{\beta D}\cong\linfty(D,\reals)$: the proof of
Lemma~\ref{lem:beta-nat-iso} holds verbatim with $D$ in place of $\nat$,
using only that $D$ is dense in $\beta D$ and the universal extension
property of $\beta D$, neither of which needs $D$ countable. Write
$Y=\Spec B$. Lemmas~\ref{lem:bezout-linfty} and
~\ref{lem:compact-opens-principal} hold verbatim with $B$ in place of
$A$: their proofs use only that elements of $\linfty(D,\reals)$ are
bounded real-valued functions on $D$, and never that $D$ is countable.

Fix a countably infinite subset $D_0\subseteq D$ and a bijection
$D_0\cong\nat_{>0}\times\nat_{>0}$; write $d=(j,t)$ for the element of
$D_0$ corresponding to a pair $(j,t)$ under this identification. Define
$f,g\in B$ by
\[
f(d)=e^{-t},
\quad
g(d)=e^{-jt}
\quad\text{for }d=(j,t)\in D_0,
\qquad
f(d)=g(d)=0
\quad\text{for }d\in D\setminus D_0.
\]
Let $J=(\sqrt{(g)}:f)$, and define $B_j$, $e_j$, $S$, and $h$ exactly as
in Section~\ref{sec:proposed-counterexample}; all of these are supported
on $D_0$, since $B_j\subseteq D_0$ for every $j$.

Every step in Lemmas~\ref{lem:row-idempotents}--\ref{lem:h-not-radJ}
checks an identity of one of two kinds: either it is verified pointwise
on $D_0$, where $f,g$ agree exactly with the construction in
Section~\ref{sec:proposed-counterexample}, or it is verified off $D_0$,
where $f=g=0$ and the identity is automatic (for instance $af=0\in
\sqrt{(g)}$ for every $a\in B$, and any witness element such as $b_j$ or
$c$ is entirely unconstrained off $D_0$, since $g=0$ there imposes no
condition on its values). In particular:
\begin{itemize}
\item Lemma~\ref{lem:row-idempotents}'s identity $(e_jf)^j=e_jg$ only
involves values on $B_j\subseteq D_0$, so $e_j\in J$ as before.
\item The lower bound $\delta_j=\inf_{t\ge1}|k(j,t)|>0$ in
Lemma~\ref{lem:row-lower-bound} only uses the restriction of $k$ to
$B_j\subseteq D_0$.
\item The unboundedness obstruction in Lemma~\ref{lem:h-not-radJ} is a
statement about the values of $c$ at the points $(j,t_j)\in D_0$; since
membership in $B=\linfty(D,\reals)$ requires boundedness over \emph{all}
of $D$, unboundedness already witnessed on the subset $D_0$ suffices to
contradict $c\in B$.
\end{itemize}
Hence $D(J)$ is not compact in $Y$, and the proof of
Theorem~\ref{claim:negative-answer} applies verbatim with $A$ replaced by
$B$ and $\Spec A$ replaced by $Y$.
\end{proof}

\section{Beyond \(\beta D\): basically disconnected spaces}
\label{sec:beyond-betaD}

Every $\beta D$ is \emph{atomic}: the clopen algebra of $\beta D$ is
$\mathcal P(D)$, with $D$ itself as a set of atoms, and the diagonal
element $h$ of Section~\ref{sec:proposed-counterexample} is literally
supported on a single point per row. This raises the question whether an
\emph{atomless} compact $F$-space --- for instance a hyperstonean space,
such as the Stone space of the Lebesgue measure algebra on $[0,1]$, which
has no isolated points at all --- escapes the diagonal obstruction. It
does not: the construction survives once ``the point $(j,t_j)$'' is
replaced by ``the cell $C_{j,t_j}$'' throughout, as we show in
Theorem~\ref{claim:beyond-betaD} below.

\begin{definition}[\cite{GillmanJerison}]\label{def:basically-disconnected}
A Tychonoff space $X$ is \emph{basically disconnected} if $\mathrm{cl}(U)$
is open for every cozero set $U=\mathrm{coz}(f)$, $f\in\cont{X}$.
(Every basically disconnected space is zero-dimensional, so this closure
is then automatically clopen.)
\end{definition}

\subsection{Gluing facts}

For a Boolean algebra $\mathcal B$ write $T=\mathrm{Stone}(\mathcal B)$
for its Stone space and $[b]\subseteq T$ for the clopen set corresponding
to $b\in\mathcal B$.

\begin{lemma}\label{lem:finite-gluing}
Let $T$ be compact Hausdorff and let $E_1,\dots,E_N\subseteq T$ be
pairwise disjoint clopen sets. If $\varphi_i\in\cont{E_i}$ for
each $i$, then the function $\varphi$ equal to $\varphi_i$ on $E_i$ and
$0$ elsewhere is continuous on $T$.
\end{lemma}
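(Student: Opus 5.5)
The plan is to use the pasting lemma for finite closed covers. Put $E_0=T\setminus(E_1\cup\cdots\cup E_N)$. Since each $E_i$ is clopen and there are finitely many of them, the union $E_1\cup\cdots\cup E_N$ is clopen, and so $E_0$ is clopen as well. Pairwise disjointness of the $E_i$ means that $E_0,E_1,\dots,E_N$ form a finite partition of $T$ into clopen sets. Set $\varphi_0=0$ on $E_0$; this is trivially continuous. Then $\varphi$ is exactly the function that agrees with $\varphi_i$ on $E_i$ for $0\le i\le N$, and it is well defined because the pieces are disjoint.

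For continuity, I will argue through preimages. Fix an open $O\subseteq\reals$. Then
\[
\varphi^{-1}(O)=\bigcup_{i=0}^N\varphi_i^{-1}(O).
\]
Each $\varphi_i^{-1}(O)$ is open in the subspace $E_i$, so it has the form $W_i\cap E_i$ for some open $W_i\subseteq T$. Because $E_i$ is open in $T$, the set $W_i\cap E_i$ is open in $T$. A union of finitely many open sets is open, so $\varphi^{-1}(O)$ is open and $\varphi$ is continuous. Alternatively, one can use the fact that continuity is local: every point has a neighbourhood, namely its own $E_i$, on which $\varphi$ coincides with a continuous function.

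I do not expect a real obstacle here. The one point that needs care is that openness of each $E_i$ is essential: with merely closed pieces, $E_0$ would be open rather than closed, and the gluing could fail. I will also note that compactness and the Hausdorff property are not used, beyond providing the setting.
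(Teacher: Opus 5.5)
Your proof is correct and follows essentially the same idea as the paper's: the pieces form a finite partition of $T$ into clopen sets, so continuity can be checked on each open piece separately. The paper argues in two stages (first on $E=E_1\cup\cdots\cup E_N$, then across $E$ and $T\setminus E$), while you do it in one stage by adjoining $E_0$ and computing preimages; this is only a cosmetic difference.
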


\begin{proof}
The set $E=E_1\cup\cdots\cup E_N$ is clopen, being a finite union of
clopen sets, and $\varphi|_E$ is continuous because each $E_i$ is open in
$E$ and $\varphi$ is continuous on each $E_i$. Since $T=E\sqcup
(T\setminus E)$ is a partition into two clopen sets, on one of which
$\varphi\equiv0$, $\varphi$ is continuous on $T$.
\end{proof}

\begin{lemma}\label{lem:tail-gluing}
Let $T$ be compact Hausdorff and let $\{E_n\}_{n\geq1}$ be pairwise
disjoint clopen subsets of $T$. If $\varphi_n\in\cont{E_n}$ and
$\sup_{E_n}|\varphi_n|\to0$ as $n\to\infty$, then the function $\varphi$
equal to $\varphi_n$ on $E_n$ for each $n$, and $0$ elsewhere (including
off $\bigcup_nE_n$), is continuous on $T$.
\end{lemma}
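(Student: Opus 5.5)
The plan is to check continuity of $\varphi$ point by point, splitting $T$ into two kinds of points: those lying in some $E_n$, and those lying in $Z:=T\setminus\bigcup_nE_n$. The argument is a uniform-limit argument via the finite case, Lemma~\ref{lem:finite-gluing}.

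\emph{Points of some $E_n$.} Each $E_n$ is open in $T$, and $\varphi|_{E_n}=\varphi_n$ is continuous. Hence $\varphi$ is continuous at every point of $\bigcup_nE_n$.

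\emph{Points of $Z$.} A cleaner route, which also handles these points, is to write $\varphi$ as a uniform limit of continuous functions. For each $N$, let $\psi_N$ equal $\varphi_n$ on $E_n$ for $n\le N$ and $0$ elsewhere. By Lemma~\ref{lem:finite-gluing}, $\psi_N\in\cont{T}$. The functions $\varphi$ and $\psi_N$ differ only on $\bigcup_{n>N}E_n$, where $\psi_N=0$ and $\varphi=\varphi_n$. Therefore
\[
\sup_T|\varphi-\psi_N|\le\sup_{n>N}\sup_{E_n}|\varphi_n|,
\]
and the right-hand side tends to $0$ as $N\to\infty$ by the hypothesis $\sup_{E_n}|\varphi_n|\to0$. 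Note that it is the tail supremum over all $n>N$, not merely the $N$-th term, that goes to $0$; this follows directly from convergence to $0$. So $\psi_N\to\varphi$ uniformly on $T$. A uniform limit of continuous real-valued functions is continuous, hence $\varphi\in\cont{T}$.

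There is no serious obstacle. The one point to be careful about is the points of $Z$, which generally lie in the closure of infinitely many $E_n$; there $\varphi$ is $0$ but is not locally constant. The uniform-approximation argument sidesteps this entirely. If one preferred a direct argument, one would fix $x\in Z$ and $\varepsilon>0$, and choose $N$ with $\sup_{E_n}|\varphi_n|<\varepsilon$ for all $n>N$. Then $U:=T\setminus(E_1\cup\dots\cup E_N)$ is an open neighbourhood of $x$ on which $|\varphi|<\varepsilon$, since $\varphi$ is either $0$ or some $\varphi_n$ with $n>N$ there. This gives continuity at $x$ because $\varphi(x)=0$. Either version completes the proof.
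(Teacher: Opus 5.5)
Your proof is correct and is essentially the paper's argument: approximate $\varphi$ uniformly by the finite gluings $\psi_N$, which are continuous by Lemma~\ref{lem:finite-gluing}, bound $\sup_T|\varphi-\psi_N|$ by the tail supremum $\sup_{n>N}\sup_{E_n}|\varphi_n|\to0$, and conclude because a uniform limit of continuous functions is continuous. Your alternative direct $\varepsilon$-argument at points of $T\setminus\bigcup_nE_n$, using the open neighbourhood $T\setminus(E_1\cup\dots\cup E_N)$, is also valid but not needed.
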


\begin{proof}
For $N\geq1$ let $\varphi_N'$ be the function equal to $\varphi_n$ on
$E_n$ for $n\leq N$ and $0$ elsewhere; by Lemma~\ref{lem:finite-gluing},
$\varphi_N'\in\cont{T}$. Since
\[
\|\varphi-\varphi_N'\|_\infty=\sup_{n>N}\sup_{E_n}|\varphi_n|\to0
\]
as $N\to\infty$, $\varphi$ is a uniform limit of continuous functions on
$T$, hence continuous.
\end{proof}

\begin{lemma}\label{lem:sigma-gluing}
Let $T$ be a basically disconnected compact Hausdorff space, so that
$\cont{T}$ is Dedekind $\sigma$-complete as a Riesz space (see
\cite[Thm.~5.6(v)]{bezhanishvili-tressl-2026}). Let $\{b_j\}_{j\geq1}$ be
a countable partition of unity in the Boolean algebra of clopen subsets
of $T$, and let $f_j\in\cont{[b_j]}$ satisfy $f_j\geq0$ and
$\sup_j\|f_j\|_\infty<\infty$. Then there exists $\varphi\in\cont{T}$
with $\varphi\equiv f_j$ on $[b_j]$ for every $j$.
\end{lemma}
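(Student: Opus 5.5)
The plan is to build $\varphi$ as the supremum, in the Dedekind $\sigma$-complete Riesz space $\cont{T}$, of the finite gluings. For each $N$, let $\varphi_N\in\cont{T}$ equal $f_j$ on $[b_j]$ for $j\le N$ and $0$ elsewhere. Because $[b_1],\dots,[b_N]$ are pairwise disjoint clopen sets, Lemma~\ref{lem:finite-gluing} shows that $\varphi_N$ is continuous. Since $f_j\ge0$, the sequence $(\varphi_N)$ is increasing. It is also bounded above by the constant $M=\sup_j\|f_j\|_\infty$. By Dedekind $\sigma$-completeness, $\varphi:=\sup_N\varphi_N$ therefore exists in $\cont{T}$, and $0\le\varphi\le M$.

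The next step is to identify $\varphi$ on each $[b_j]$. Fix $j$ and let $\chi=\chi_{[b_j]}$, which is continuous because $[b_j]$ is clopen.

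\emph{Lower bound.} For $N\ge j$ we have $\varphi_N\ge\chi\varphi_N$, and $\chi\varphi_N$ is the function equal to $f_j$ on $[b_j]$ and $0$ off it. Hence $\varphi\ge f_j$ pointwise on $[b_j]$.

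\emph{Upper bound.} Define $\psi$ to equal $f_j$ on $[b_j]$ and $\varphi$ off $[b_j]$. This $\psi$ is continuous, since it is glued along the clopen partition $[b_j]\sqcup(T\setminus[b_j])$ as in Lemma~\ref{lem:finite-gluing}. Each $\varphi_N$ equals $f_j$ or $0$ on $[b_j]$ and is at most $\varphi$ off $[b_j]$, so $\psi$ is an upper bound for every $\varphi_N$. By minimality of the supremum, $\varphi\le\psi$, which gives $\varphi\le f_j$ on $[b_j]$.

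Combining the two bounds, $\varphi\equiv f_j$ on $[b_j]$ for every $j$, as claimed.

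The main obstacle is the order-theoretic point that the supremum in $\cont{T}$ need not be the pointwise supremum. The pointwise supremum is not necessarily continuous at points outside $\bigcup_j[b_j]$, which is only dense when the $b_j$ form a partition of unity. For this reason the argument must avoid claiming pointwise equality everywhere. It only uses comparisons with continuous competitors that modify a function on a single clopen set, and the upper-bound step relies exactly on such a competitor $\psi$ being continuous.

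The cited Dedekind $\sigma$-completeness is the only place basic disconnectedness enters, and I would invoke \cite[Thm.~5.6(v)]{bezhanishvili-tressl-2026} directly for it. The fact that $b_j$ is a partition of unity is not needed for existence. It only ensures that $\varphi$ is determined on a dense set.
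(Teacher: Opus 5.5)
Your proof is correct and follows the paper's argument. It uses the same increasing sequence of finite gluings, the same supremum from Dedekind $\sigma$-completeness, and the same continuous competitor (equal to $f_j$ on $[b_j]$ and to $\varphi$ elsewhere) to pin down $\varphi$ on each $[b_j]$; the only difference is that you get $\varphi\le\psi$ directly from minimality, while the paper phrases that step as a contradiction at a point of strict inequality.
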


\begin{proof}
For $N\geq1$ let
\[
h_N=\sum_{j\leq N}f_j\,\chi_{[b_j]}\in\cont{T}
\]
(continuous by Lemma~\ref{lem:finite-gluing}). The sequence $(h_N)$ is
increasing, since each $f_j\geq0$, and bounded above by the constant
$\sup_j\|f_j\|_\infty$. By Dedekind $\sigma$-completeness, $(h_N)$ has a
supremum $\varphi\in\cont{T}$.

For every $j_0$ and every $N\geq j_0$, $h_N$ restricted to $[b_{j_0}]$
equals $f_{j_0}$ exactly (the other terms of the sum vanish there, the
$[b_j]$ being pairwise disjoint), so, since $\varphi$ is an upper bound
of every $h_N$, $\varphi\geq f_{j_0}$ on $[b_{j_0}]$. It remains to rule
out strict inequality.

Fix $j_0$ and suppose $\varphi(x_0)>f_{j_0}(x_0)$ for some
$x_0\in[b_{j_0}]$. Let $\varphi_1$ agree with $\varphi$ off $[b_{j_0}]$
and with $f_{j_0}$ on $[b_{j_0}]$; by Lemma~\ref{lem:finite-gluing}
(applied to the two clopen pieces $[b_{j_0}]$ and $T\setminus[b_{j_0}]$),
$\varphi_1\in\cont{T}$. For every $N$, $h_N\leq\varphi_1$: off
$[b_{j_0}]$ this is inherited from $h_N\leq\varphi$; on $[b_{j_0}]$,
$h_N\leq f_{j_0}=\varphi_1$ because $h_N$ restricted to $[b_{j_0}]$ is
either $0$ or $f_{j_0}$. So $\varphi_1$ is an upper bound of $(h_N)$ with
$\varphi_1\leq\varphi$ and $\varphi_1(x_0)<\varphi(x_0)$, contradicting
that $\varphi$ is the \emph{least} upper bound. Hence $\varphi=f_j$ on
$[b_j]$ for every $j$.
\end{proof}

\subsection{Basic disconnectedness from \(\sigma\)-completeness}

\begin{definition}[\cite{GillmanHenriksen1956}]\label{def:F-space}
A Tychonoff space, $X$, is said to be an $F$-space if $\cont{X}$ is B\'ezout.
\end{definition}

\begin{remark}
\cite{bezhanishvili-tressl-2026} instead \emph{defines} $T$ to be an
$F$-space by requiring $\Spec{\cont{T}}$ to be stranded, i.e.\ a
forest under specialization. The two definitions are equivalent
--- this is the classical content of \cite[Theorem in
14.25]{GillmanJerison} --- so Definition~\ref{def:F-space} above and the
notion of $F$-space used in \cite{bezhanishvili-tressl-2026} pick out the
same class of spaces; we use the Bézout formulation throughout because it
is what feeds directly into Theorem~\ref{thm:bezout-criterion}, which is
how $F$-spaces enter our arguments (e.g.\ in the proof of
Theorem~\ref{claim:beyond-betaD}).
\end{remark}

Theorem~\ref{claim:beyond-betaD} below is stated for an abstract
$\sigma$-complete Boolean algebra $\mathcal B$, but its proof needs to
know that $T=\mathrm{Stone}(\mathcal B)$ is basically disconnected (to
invoke that basically disconnected spaces are $F$-spaces --- hence
Bézout --- and to invoke Lemma~\ref{lem:sigma-gluing}). The following
closes that gap.

\begin{lemma}\label{lem:sigma-complete-implies-bd}
If $\mathcal B$ is a $\sigma$-complete Boolean algebra, then
$T=\mathrm{Stone}(\mathcal B)$ is basically disconnected.
\end{lemma}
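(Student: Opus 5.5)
The plan is to show directly that the closure of every cozero set of $T$ is open, by writing the cozero set as a countable union of clopen sets and showing that its closure is the clopen set of the supremum in $\mathcal B$.

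\emph{Step 1: exhausting the cozero set by clopens.} Fix $\varphi\in\cont{T}$ and put $U=\mathrm{coz}(\varphi)$. Then
\[
U=\bigcup_{n\geq1}F_n,\qquad F_n=\{x:|\varphi(x)|\geq 1/n\},
\]
and each $F_n$ is compact. Since $T$ is a Stone space, the clopens $[b]$ form a basis, and $F_n\subseteq U$ with $U$ open. Cover the compact set $F_n$ by finitely many basic clopens contained in $U$, and let $b_n\in\mathcal B$ be their join. This gives $F_n\subseteq[b_n]\subseteq U$, and hence
\[
U=\bigcup_n[b_n].
\]

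\emph{Step 2: identifying the closure.} Let $b=\bigvee_n b_n$, which exists by $\sigma$-completeness. The claim is that $\mathrm{cl}(U)=[b]$; since $[b]$ is clopen, this proves the lemma.

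For the inclusion $\mathrm{cl}(U)\subseteq[b]$: each $[b_n]\subseteq[b]$, so $U\subseteq[b]$, and $[b]$ is closed.

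For the reverse inclusion $[b]\subseteq\mathrm{cl}(U)$, argue by contradiction. Suppose $x\in[b]\setminus\mathrm{cl}(U)$. Choose a basic clopen $[c]$ with $x\in[c]$ and $[c]\cap U=\emptyset$. Then $c\wedge b_n=0$ for every $n$, so $b_n\leq b\wedge\neg c$ for all $n$. Since $b$ is the least upper bound of the $b_n$, this gives $b\leq b\wedge\neg c$, that is, $b\wedge c=0$. But $x\in[b]\cap[c]=[b\wedge c]=\emptyset$, a contradiction.

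\emph{Main obstacle.} The only delicate point is that a countable join in $\mathcal B$ need not correspond to the set-theoretic union $\bigcup_n[b_n]$; the latter need not even be closed. Step 2 sidesteps this by comparing with the closure instead of the union, using only the least-upper-bound property against the basic clopens $[c]$ and the fact that the clopen sets of $T$ are exactly the sets $[c]$. The same argument also shows that $T$ is zero-dimensional, as noted in Definition~\ref{def:basically-disconnected}.
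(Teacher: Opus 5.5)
Your proposal is correct and follows the paper's proof essentially step for step: you exhaust $\mathrm{coz}(\varphi)$ by countably many clopens using compactness of the sets $\{|\varphi|\geq 1/n\}$, take their countable join $b$ in $\mathcal B$, and prove $\mathrm{cl}(U)=[b]$ by applying the least-upper-bound property to a clopen neighbourhood disjoint from $U$. One small correction to your closing remark: zero-dimensionality of $T$ is not something this argument shows. It is an input, namely the fact that the sets $[c]$ form a basis.
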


\begin{proof}
Let $h\in\cont{T}$ and $U=\mathrm{coz}(h)$; we show
$\mathrm{cl}(U)$ is clopen.

For $n\geq1$, $\{x:|h(x)|\geq1/n\}$ is closed, hence compact. Since $T$
is zero-dimensional and $h$ is continuous, every point of this compact
set has a clopen neighbourhood on which $|h|>1/(n+1)$; by compactness,
finitely many such neighbourhoods cover $\{|h|\geq1/n\}$. Let $E_n$ be
their (clopen) union, so
\[
\{|h|\geq1/n\}\subseteq E_n\subseteq\{|h|>1/(n+1)\}\subseteq U.
\]
Since $h(x)\neq0$ iff $|h(x)|\geq1/n$ for some $n$, taking the union
over $n$ gives $U=\bigcup_nE_n$: a countable union of clopen sets.

By $\sigma$-completeness, $F:=\bigvee_nE_n$ exists in the Boolean
algebra of clopen subsets of $T$. As $F$ is closed and contains
$\bigcup_nE_n=U$, $F\supseteq\mathrm{cl}(U)$. Conversely, suppose
$x\in F$ but $x\notin\mathrm{cl}(U)$; by zero-dimensionality there is a
clopen $V\ni x$ with $V\cap U=\emptyset$, hence $V\cap E_n=\emptyset$
for every $n$, i.e.\ $E_n\leq V^c$ for every $n$. Then $F\wedge V^c$ is
also a clopen upper bound of $\{E_n\}$, and $F\wedge V^c<F$ strictly
(since $x\in F\cap V$, so $F\wedge V\neq0$), contradicting that $F$ is
the \emph{least} such upper bound. Hence $F\subseteq\mathrm{cl}(U)$, so
$F=\mathrm{cl}(U)$.

Thus $\mathrm{cl}(U)=F$ is clopen, in particular open. As $U$ was an
arbitrary cozero set, $T$ is basically disconnected.
\end{proof}

This is the converse of Lemma~\ref{lem:clopen-sigma-complete} below
(which shows the other direction, for the clopen algebra of a basically
disconnected space); together the two give an if-and-only-if.

\subsection{The point-versus-cell obstruction}

In the construction of Section~\ref{sec:proposed-counterexample}, the
diagonal set $S=\{(j,t_j):j\geq1\}$ consists of a single point per row,
and $h=\chi_Sf$ is supported there. If $\mathcal B$ is atomless, its
Stone space has no isolated points, so no nonzero element of
$\cont{T}$ can be supported at a single point: any function
vanishing on every neighbourhood of a non-isolated point $x$ except at
$x$ itself is discontinuous at $x$ unless its value there is $0$. The
point-supported diagonal element therefore has no atomless analogue.
Replacing ``the point $(j,t_j)$'' by ``the cell $C_{j,t_j}$'' --- a
non-degenerate clopen piece, not a point --- repairs the construction, as
we now show.

\subsection{The generalized construction}

\begin{theorem}\label{claim:beyond-betaD}
Let $\mathcal B$ be an infinite $\sigma$-complete Boolean algebra and let
$T=\mathrm{Stone}(\mathcal B)$. Suppose $\mathcal B$ admits a countable
grid: a partition of unity $\{B_j\}_{j\geq1}$ together with, for each
$j$, a countable partition $\{C_{j,t}\}_{t\geq1}$ of $B_j$, all pieces
nonzero. Then $\KOp{\Spec{\cont{T}}}$ is not a Heyting algebra.
\end{theorem}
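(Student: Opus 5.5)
The plan is to transport the diagonal argument of Section~\ref{sec:proposed-counterexample} to $T$, replacing points by cells. First, by Lemma~\ref{lem:sigma-complete-implies-bd}, $T$ is basically disconnected. Hence it is an $F$-space, so $\cont{T}$ is B\'ezout, and Theorem~\ref{thm:bezout-criterion} together with Lemma~\ref{lem:implications-coincide} and Proposition~\ref{prop:frame-implication} apply. It therefore suffices to exhibit $f,g\in\cont{T}$ for which $D(J)$, with $J=(\sqrt{(g)}:f)$, is not compact. I would also replace the $C_{j,t}$ by the clopen sets $[C_{j,t}]$ and the $B_j$ by $[B_j]$. Note that $\bigcup_t[C_{j,t}]$ may be strictly smaller than $[B_j]$; only the Boolean join equals $B_j$.

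\emph{Definition of $f$ and $g$.} I would use Lemma~\ref{lem:sigma-gluing}. Inside $[B_j]$, the cells $\{C_{j,t}\}_t$ partition $B_j$, and $[B_j]$ is itself the Stone space of a $\sigma$-complete algebra. So there is a continuous $f_j$ on $[B_j]$ equal to $e^{-t}$ on $[C_{j,t}]$, and likewise one equal to $e^{-jt}$; here $0\le f_j\le 1$. Gluing again over $j$ gives $f,g\in\cont{T}$ with $f=e^{-t}$ and $g=e^{-jt}$ on $[C_{j,t}]$. On the leftover set $[B_j]\setminus\bigcup_t[C_{j,t}]$ the suprema force $f=g=0$: the least upper bound of the partial sums is pointwise $0$ there, by the argument in the proof of Lemma~\ref{lem:sigma-gluing}. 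Then $g=f^j$ on all of $[B_j]$, including the leftover part. So with $e_j=\chi_{[B_j]}$ we get $(e_jf)^j=e_jg$, and therefore $e_j\in J$, exactly as in Lemma~\ref{lem:row-idempotents}.

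\emph{Compactness and the diagonal element.} Assume $D(J)$ is compact. As before, pick $a_1,\dots,a_n\in J$; by B\'ezout, $(a_1,\dots,a_n)=(k)$ for some $k$, and $\sqrt J=\sqrt{(k)}$. The idempotent $e_j$ lies in $\sqrt{(k)}$, hence $e_j\in(k)$, say $e_j=b_jk$. This gives $|k|\ge\delta_j:=1/\|b_j\|_\infty>0$ on $[B_j]$. Choose $t_j>j+\max\{0,-\log\delta_j\}$, put $E_j=[C_{j,t_j}]$ and $h=\chi_{\bigcup E_j}f$. This $h$ equals $e^{-t_j}$ on $E_j$ and $0$ elsewhere, and it is continuous by Lemma~\ref{lem:tail-gluing}, since $e^{-t_j}\to0$. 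Next let $q=h/k$ on $E_j$ and $0$ elsewhere. On $E_j$ we have $|q|\le e^{-t_j}/\delta_j<e^{-j}$, and $q$ is continuous on each $E_j$ because $k\ne0$ there. Lemma~\ref{lem:tail-gluing} again makes $q$ continuous on $T$. Hence $h=qk\in(k)\subseteq\sqrt J$.

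\emph{Contradiction.} Conversely, $h\in\sqrt J$ would give some $c\in\cont{T}$ with $(h^rf)^m=cg$. Evaluating at any point of the nonempty cell $E_j$ gives $|c|=e^{(j-m(r+1))t_j}$, which is unbounded in $j$. This contradicts the compactness of $T$, so $h\notin\sqrt J$, and $D(J)$ is not compact.

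The main obstacle I expect is the gluing. The cells need not cover $[B_j]$, and the rows need not cover $T$. One must check that the $\sigma$-supremum vanishes on these remainders, so that the identity $g=f^j$ on $[B_j]$, and hence $e_j\in J$, still holds. It must also vanish off all rows, where $f=g=0$ makes the conditions automatic. I would handle this as in Lemma~\ref{lem:sigma-gluing}: if the supremum were positive at a point of the remainder, modify it to $0$ on a small clopen neighbourhood. Such a neighbourhood is disjoint from every cell (by the $\sigma$-join property on the Boolean side), so the modified function is a smaller continuous upper bound, which is a contradiction.
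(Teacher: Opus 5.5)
Your route is the paper's: basic disconnectedness, glued $f,g$, row idempotents, the bound $\delta_j$, the diagonal $h$ and quotient $q$ via Lemma~\ref{lem:tail-gluing}, and the growth obstruction. Those steps are all fine. The gap is in the step you flag as the main obstacle, namely showing $g=f^j$ on the leftover set $[B_j]\setminus\bigcup_t[C_{j,t}]$, which you need for $e_j\in J$.

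Your argument is that a leftover point has a clopen neighbourhood disjoint from every cell, on which the supremum can be lowered to $0$. This has the $\sigma$-join property backwards. If $0\neq a\leq B_j$ and $a\wedge C_{j,t}=0$ for all $t$, then $B_j=\bigvee_tC_{j,t}\leq a^c$, which forces $a=0$. So \emph{every} nonempty clopen subset of $[B_j]$ meets some cell. Zeroing the function on such a set destroys the upper-bound property, since the partial sums are eventually $e^{-t}>0$ on each cell it meets. The argument never uses the decay of $e^{-t}$, and in fact it proves too much: your further claim that $f=g=0$ off all rows is false. Take $\mathcal P(\nat_{>0}\times\nat_{>0})$ with $B_j$ the rows and $C_{j,t}=\{(j,t)\}$. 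Then $f=e^{-1}$ at any free ultrafilter containing the column $\{(j,1):j\geq1\}$, and such a point lies in no $[B_j]$. Nothing in the proof needs values off the rows, since $e_j$, $h$ and $q$ are supported on them, so that claim can simply be dropped.

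The repair is to use the Boolean fact in its correct direction, as the paper does. It says $\bigcup_t[C_{j,t}]$ is dense in $[B_j]$. Since $f^j$ and $g$ are continuous and both equal $e^{-jt}$ on each $[C_{j,t}]$, they agree on all of $[B_j]$, with no need to know their values on the leftover. Those values are in fact $0$, but the reason is density plus decay. For each $N$, the clopen set $[B_j]\setminus([C_{j,1}]\cup\dots\cup[C_{j,N}])$ contains the leftover, and on a dense subset of it $0\leq f\leq e^{-N-1}$.

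Alternatively, build $f_j,g_j$ with Lemma~\ref{lem:tail-gluing}, as the paper does, so they vanish on the leftover by definition. Then Lemma~\ref{lem:sigma-gluing} gives $f\equiv f_j$ and $g\equiv g_j$ on the whole of $[B_j]$, so $g=f^j$ there immediately.
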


\begin{proof}
Write $A=\cont{T}$. Since $\mathcal B$ is $\sigma$-complete,
$T=\mathrm{Stone}(\mathcal B)$ is basically disconnected
(Lemma~\ref{lem:sigma-complete-implies-bd}), hence an $F$-space, so $A$
is Bézout; and $\cont{T}$ is Dedekind $\sigma$-complete, so
Lemma~\ref{lem:sigma-gluing} applies with this $T$.

\emph{Construction of $f,g$.} For each $j$, define
$f_j,g_j\in\cont{[B_j]}$ by $f_j\equiv e^{-t}$, $g_j\equiv
e^{-jt}$ on $[C_{j,t}]$; both are nonnegative and bounded by $1$, and
since these values decay in $t$, Lemma~\ref{lem:tail-gluing} shows
$f_j,g_j$ are well-defined continuous functions on $[B_j]$. The values
of $\{f_j\}_j$ (respectively $\{g_j\}_j$) are uniformly bounded by $1$
but do not decay as $j\to\infty$ (e.g.\ $f_j\equiv e^{-1}$ on
$[C_{j,1}]$ for every $j$), so gluing them across the partition of
unity $\{B_j\}_j$ needs Lemma~\ref{lem:sigma-gluing} in full strength;
it produces $f,g\in A$ with $f\equiv f_j$, $g\equiv g_j$ on $[B_j]$ for
every $j$ --- that is, $f\equiv e^{-t}$ and $g\equiv e^{-jt}$ on
$[C_{j,t}]$ for all $j,t$.

\emph{Row idempotents.} With $e_j:=\chi_{[B_j]}$ and
$J:=(\sqrt{(g)}:f)$: since $\bigvee_tC_{j,t}=B_j$, the set
$\bigcup_t[C_{j,t}]$ is dense in $[B_j]$ --- if a nonempty clopen
$[a]\subseteq[B_j]$ met no $[C_{j,t}]$, then $a\wedge C_{j,t}=0$ for
every $t$, so $a=a\wedge B_j=a\wedge\bigvee_tC_{j,t}
=\bigvee_t(a\wedge C_{j,t})=0$ (countable distributivity, as in
Theorem~\ref{thm:grid-existence}'s Step~1), a contradiction. Since
$f^j$ and $g$ are continuous and agree on this dense subset (both equal
$e^{-jt}$ on $[C_{j,t}]$), they agree on all of $[B_j]$. As in
Lemma~\ref{lem:row-idempotents}, $(e_jf)^j=e_jf^j=e_jg\in(g)$, so
$e_j\in J$.

\emph{Row lower bound.} Assuming $D(J)$ compact gives $k\in A$ with
$\sqrt J=\sqrt{(k)}$, as in
Section~\ref{sec:proposed-counterexample}. From $e_j\in J\subseteq
\sqrt{(k)}$ and idempotence of $e_j$, $e_j=b_jk$ for some $b_j\in A$;
this identity of continuous functions holds at every point of $[B_j]$,
giving $\delta_j:=\inf_{[B_j]}|k|\geq1/\|b_j\|_\infty>0$.

\emph{The diagonal element.} Choose $t_j$ as in
Section~\ref{sec:proposed-counterexample} and set $h:=f$ on
$\bigcup_jC_{j,t_j}$, $0$ elsewhere; this lies in $A$ by
Lemma~\ref{lem:tail-gluing}, since $|f|=e^{-t_j}\to0$ on the chosen
cells. Define $q:=e^{-t_j}/k$ on $[C_{j,t_j}]$ (continuous and bounded
there, since $k$ is nonvanishing and bounded away from $0$ by $\delta_j$
on the larger set $[B_j]\supseteq[C_{j,t_j}]$), $0$ elsewhere; again $q\in
A$ by Lemma~\ref{lem:tail-gluing}. Then $h=qk\in(k)\subseteq\sqrt{(k)}=
\sqrt J$.

\emph{Growth obstruction.} Suppose $h\in\sqrt J$, so $h^r\in J$,
$h^rf\in\sqrt{(g)}$, and $(h^rf)^m=cg$ for some $c\in A$. On
$[C_{j,t_j}]$, $h$ and $f$ both equal the constant $e^{-t_j}$, and $g$
equals the constant $e^{-jt_j}$, so this identity forces $c$ to equal the
constant $e^{(j-m(r+1))t_j}$ on all of $[C_{j,t_j}]$. For $j>m(r+1)$ this
is positive and, since $t_j>j$, unbounded as $j\to\infty$, contradicting
$c\in A$.

As in Section~\ref{sec:proposed-counterexample}, this contradiction shows
$D(J)$ is not compact, and Proposition~\ref{prop:frame-implication}
together with Lemma~\ref{lem:implications-coincide} then show
$\KOp{\Spec A}$ is not a Heyting algebra.
\end{proof}

\begin{corollary}\label{cor:hyperstonean}
Let $\mathcal M$ be the Lebesgue measure algebra on $[0,1]$ (measurable
sets modulo null sets) and let $T=\mathrm{Stone}(\mathcal M)$ be the
associated hyperstonean space, so that $\cont{T}\cong
L^\infty([0,1],\reals)$. Then $\KOp{\Spec{\cont{T}}}$ is not a
Heyting algebra.
\end{corollary}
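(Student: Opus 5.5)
The plan is to apply Theorem~\ref{claim:beyond-betaD} directly with $\mathcal B=\mathcal M$. Since $T=\mathrm{Stone}(\mathcal M)$ by definition, only three hypotheses need checking: $\mathcal M$ is infinite, $\mathcal M$ is $\sigma$-complete, and $\mathcal M$ admits a countable grid. The identification $\cont{T}\cong L^\infty([0,1],\reals)$ is not used in the argument; it only describes the ring concretely.

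\emph{$\sigma$-completeness and infiniteness.} For a countable family of classes $[E_n]$ in $\mathcal M$, the class $[\bigcup_n E_n]$ is their supremum. It is an upper bound, since $E_n\subseteq\bigcup_n E_n$. Now let $[F]$ be any upper bound. Then each $E_n\setminus F$ is null, so $\bigcup_nE_n\setminus F$ is a countable union of null sets and is null. Hence $[\bigcup_nE_n]\leq[F]$. Infiniteness is clear: the classes of $[0,1/n]$, for $n\geq1$, are pairwise distinct because their measures differ.

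\emph{Countable grid.} Take the intervals $B_j=[2^{-j},2^{-j+1})$ for $j\geq1$. Up to the null set $\{0,1\}$ these partition $[0,1]$, so their classes form a partition of unity in $\mathcal M$: any upper bound of all $[B_j]$ contains their union, which is co-null. Next subdivide each $B_j$ dyadically. Let $C_{j,t}$ be the $t$-th half-open subinterval of $B_j$, taking successive halves towards the right endpoint; it has length $2^{-j}2^{-t}$. Each $C_{j,t}$ has positive measure, so its class is nonzero. The $C_{j,t}$ are pairwise disjoint. Their union over $t$ is $B_j$ minus a single point, so $\bigvee_t[C_{j,t}]=[B_j]$ by the same union argument. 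Hence $\{[C_{j,t}]\}_t$ is a countable partition of $[B_j]$ with all pieces nonzero.

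With these hypotheses checked, Theorem~\ref{claim:beyond-betaD} gives the conclusion directly. The only point requiring care is that countable suprema in $\mathcal M$ are computed by unions modulo null sets. That is exactly what makes the partition-of-unity and sub-partition conditions hold despite the stray endpoints and the null-set identifications. Nothing here is a real obstacle.
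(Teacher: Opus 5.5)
Your proposal is correct and follows the paper's proof: both apply Theorem~\ref{claim:beyond-betaD} to $\mathcal M$ with an explicit grid of dyadic interval rows, each subdivided into countably many positive-measure columns, using that countable joins in $\mathcal M$ are the classes of unions of representatives; you also verify $\sigma$-completeness and infiniteness explicitly, which the paper leaves implicit. One harmless slip: your dyadic columns cover the half-open row $B_j=[2^{-j},2^{-j+1})$ exactly, not just up to a single point, and either way nothing changes modulo null sets.
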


\begin{proof}
$\mathcal M$ is atomless, hence admits a countable grid: partition
$[0,1]$ into rows $R_j=[1-2^{1-j},1-2^{-j})$, $j\geq1$, and partition
each row into columns $R_{j,t}\subseteq R_j$ of positive measure,
$t\geq1$, with $\bigcup_tR_{j,t}=R_j$ up to a null set. Countable joins
in the measure algebra are computed by taking unions of representatives,
so the classes $[R_j]$ and $[R_{j,t}]$ satisfy
$\bigvee_j[R_j]=1_{\mathcal M}$ and $\bigvee_t[R_{j,t}]=[R_j]$ exactly
(the rows differ from $[0,1]$ only by the null set $\{1\}$, and each
row's columns differ from it only by a null set), so this is a genuine
grid in the sense required by Theorem~\ref{claim:beyond-betaD}. Apply
that theorem.
\end{proof}

\section{The full negative resolution}
\label{sec:full-negative}

Theorem~\ref{claim:beyond-betaD} still needs a countable grid to exist.
This section removes that hypothesis: every infinite $\sigma$-complete
Boolean algebra admits one (Theorem~\ref{thm:grid-existence} below), and
every infinite basically disconnected compact Hausdorff space has clopen
algebra of exactly this type (Lemmas~\ref{lem:clopen-sigma-complete}
and~\ref{lem:clopen-infinite}). Together with the chain Esakia
$\Rightarrow$ pseudocomplemented $\Rightarrow$ basically disconnected
already established in \cite[Thm.~5.6]{bezhanishvili-tressl-2026}, this
settles the open problem of \cite{bezhanishvili-tressl-2026}: no infinite
compact Hausdorff space $T$ has $\Spec{\cont{T}}$ Esakia
(Theorem~\ref{thm:full-negative}).

\subsection{Grid existence}

\begin{theorem}\label{thm:grid-existence}
Every infinite $\sigma$-complete Boolean algebra $\mathcal B$ admits a
countable grid: a partition of unity $\{B_j\}_{j\geq1}$ (pairwise
disjoint, nonzero, $\bigvee_jB_j=1$) together with, for each $j$, a
countable partition $\{C_{j,t}\}_{t\geq1}$ of $B_j$ (pairwise disjoint,
nonzero, $\bigvee_tC_{j,t}=B_j$).
\end{theorem}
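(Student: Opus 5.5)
The plan is to reduce everything to one basic fact: every infinite Boolean algebra contains an infinite sequence of pairwise disjoint nonzero elements. Given such a sequence $(d_n)_{n\geq1}$ in $\mathcal B$, $\sigma$-completeness lets us form $d_0:=\neg\bigvee_n d_n$. We then absorb $d_0$ into the first piece, replacing $d_1$ by $d_1\vee d_0$. The result is a countable partition of unity with all pieces nonzero, and joins are unchanged because $\bigvee_n d_n\vee d_0=1$.

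To get the grid, we split $\mathbb N_{>0}$ into infinitely many disjoint infinite sets $N_j$, for instance via the fixed bijection $\mathbb N_{>0}\cong\mathbb N_{>0}\times\mathbb N_{>0}$. We set $B_j:=\bigvee_{n\in N_j}d_n$, with $d_0$ absorbed into $B_1$, and take as the cells $C_{j,t}$ the elements $d_n$, $n\in N_j$, enumerated by $t$ (again absorbing $d_0$ into $C_{1,1}$). Pairwise disjointness of the $B_j$ needs countable distributivity: $B_j\wedge B_{j'}=\bigvee_{n\in N_j,m\in N_{j'}}(d_n\wedge d_m)=0$. \emph{Step~1} is to prove that law, $a\wedge\bigvee_n x_n=\bigvee_n(a\wedge x_n)$, in any Boolean algebra where the countable joins exist. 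The proof is the standard one: if $u$ bounds every $a\wedge x_n$, then $x_n\leq u\vee\neg a$ for all $n$. Hence $\bigvee x_n\leq u\vee\neg a$, so $a\wedge\bigvee x_n\leq u$. This is the step cited as ``Step~1'' in Theorem~\ref{claim:beyond-betaD}. With it, $\bigvee_t C_{j,t}=B_j$ by construction, and $\bigvee_j B_j=\bigvee_n d_n\vee d_0=1$ by associativity of joins.

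\emph{Step~2}, the main obstacle, is producing the disjoint sequence $(d_n)$ in an arbitrary infinite Boolean algebra. I would argue as follows. Call $b\neq0$ \emph{infinite} if $\{x\leq b\}$ is infinite; $1$ is infinite. If $b$ is infinite, pick $x<b$ with $x\neq0$, $x\neq b$. This exists because otherwise $b$ is an atom and its downset $\{0,b\}$ is finite. Then $\{y\leq b\}$ embeds in $\{y\leq x\}\times\{y\leq b\wedge\neg x\}$, so one of $x$, $b\wedge\neg x$ is infinite. Let $b'$ be an infinite one, and let $d$ be the other, which is nonzero. Iterating from $b=1$ gives a decreasing chain of infinite elements $b_0=1>b_1>\cdots$ and nonzero elements $d_{n+1}=b_n\wedge\neg b_{n+1}$. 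These are pairwise disjoint: for $m<n$, $d_n\leq b_{n-1}\leq b_m$ while $d_{m+1}\wedge b_{m+1}=0$. The only care needed is that this uses countably many dependent choices, which is harmless.

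Finally, all pieces are nonzero by construction, so $\{B_j\}$ and $\{C_{j,t}\}$ form a countable grid as required.
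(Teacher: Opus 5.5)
Your proof is correct, and it takes a simpler route than the paper's. Both arguments use the same three ingredients:
\begin{itemize}
\item the peeling construction of a disjoint nonzero sequence in an arbitrary infinite Boolean algebra (the paper's Step~0, your Step~2);
\item countable distributivity (Step~1 in both);
\item absorbing the remainder $\neg\bigvee_n d_n$ into the first piece (the paper's Step~3).
\end{itemize}

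The difference is how the second level of the grid is produced. The paper first builds the rows, then recurses inside each ${\downarrow}B_j$ to get a fresh antichain there. For that recursion to work, every piece must keep infinite relativization. This forces the case split on the atoms of $\mathcal B$. In Case~A, a countable set of atoms is split into infinitely many infinite blocks, which are then joined. In Case~B, the peeling is run inside the atomless part ${\downarrow}e^c$.

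You instead notice that the block-splitting trick of Case~A works for any disjoint sequence, not only for atoms. Partitioning the index set into infinitely many infinite sets $N_j$ makes the $d_n$ themselves the cells and their blockwise joins the rows. So one antichain and one run of dependent choices suffice, with no recursion, no atom analysis, and no infinite-relativization invariant to maintain.

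The paper's route does give a little more: its cells have infinite relativization, while yours may be atoms. However, Theorem~\ref{claim:beyond-betaD} only uses that the cells are nonzero, so nothing needed is lost.

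One cosmetic point: the indices in your disjointness check for the peeled sequence are off by one. For $k<n$ you want $d_n\le b_{n-1}\le b_k$ together with $d_k\wedge b_k=0$.
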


\begin{proof}
\emph{Step 0: an unconditional antichain.} Since $\mathcal B$ is infinite
it has more than two elements; pick $0<c_1<1$ and set $d_1:=c_1^c$ (both
nonzero). Since $\mathcal B\cong{\downarrow}c_1\times{\downarrow}d_1$ and
$\mathcal B$ is infinite, at least one factor is infinite; relabel so
that ${\downarrow}c_1$ is infinite. Recursing inside ${\downarrow}c_1$:
pick $0<c_2<c_1$ with ${\downarrow}c_2$ infinite, and set
$d_2:=c_1\wedge c_2^c$. Iterating gives a strictly decreasing sequence
$c_1>c_2>\cdots$ with ${\downarrow}c_n$ infinite at each stage, and
$d_n:=c_{n-1}\wedge c_n^c$ (with $c_0:=1$) pairwise disjoint (if $m>n$
then $d_m\leq c_{m-1}\leq c_n$, so
$d_m\wedge d_n\leq c_n\wedge c_{n-1}\wedge c_n^c=0$) and nonzero by
construction. This gives an antichain $\{d_n\}_{n\geq1}$, valid in any
infinite Boolean algebra, with no completeness assumption.

\emph{Step 1: countable distributivity.} For $x\in\mathcal B$ and
countable $\{y_n\}\subseteq\mathcal B$,
\[
x\wedge\bigvee_ny_n=\bigvee_n(x\wedge y_n)
\]
(both joins exist by $\sigma$-completeness). Writing $s=\bigvee y_n$,
$s'=\bigvee(x\wedge y_n)$: $s'\leq x\wedge s$ is immediate. For the
converse, $x\wedge s\leq s'$ is equivalent to $s\leq s'\vee x^c$; and for
each $n$, $y_n=(y_n\wedge x)\vee(y_n\wedge x^c)\leq s'\vee x^c$ (since
$y_n\wedge x\leq x\wedge y_n\leq s'$), so $s'\vee x^c$ is an upper bound
of $\{y_n\}$, giving $s\leq s'\vee x^c$ since $s$ is the least such.
Hence $x\wedge s\leq s'$, so $x\wedge s=s'$.

\emph{Step 2: two cases by number of atoms.} Let $\mathrm{At}(\mathcal
B)$ be the set of atoms of $\mathcal B$.

\emph{Case A: $\mathrm{At}(\mathcal B)$ infinite.} Choose a countably
infinite subset $\{p_1,p_2,\dots\}\subseteq\mathrm{At}(\mathcal B)$ and
partition it into countably many countably infinite pieces
$P_1,P_2,\dots$ via any bijection $\nat\cong\nat\times\nat$. By
$\sigma$-completeness, $B_j:=\bigvee_{p\in P_j}p$ exists. Then $B_j\neq0$
(it dominates any $p\in P_j$); for $j\neq k$,
\[
B_j\wedge B_k=\bigvee_{p\in P_j}(p\wedge B_k)
=\bigvee_{p\in P_j}\bigvee_{q\in P_k}(p\wedge q)=0
\]
by Step~1 (distinct atoms are disjoint); ${\downarrow}B_j$ is infinite,
since distinct finite $F\neq F'\subseteq P_j$ give distinct joins
$\bigvee F\neq\bigvee F'$ (an atom $p\in P_j$ satisfies $p\leq\bigvee F$
iff $p\in F$, again by Step~1), and $P_j$ is countably infinite; and the
atoms of ${\downarrow}B_j$ are exactly $P_j$ (if $q$ is an atom
$\leq B_j$, then $q=q\wedge B_j=\bigvee_{p\in P_j}(q\wedge p)\neq0$, so
$q\wedge p\neq0$ for some $p\in P_j$, forcing $q=p$). Finally,
${\downarrow}B_j$ is $\sigma$-complete: any countable
$\{y_n\}\subseteq{\downarrow}B_j$ has $\bigvee y_n$ (computed in
$\mathcal B$) satisfying $\bigvee y_n\leq B_j$, since $B_j$ is an upper
bound of $\{y_n\}$ and $\bigvee y_n$ is the least such. So
${\downarrow}B_j$ is again an infinite $\sigma$-complete Boolean algebra
with countably many atoms; Case~A recurses inside it to produce a
countably infinite antichain below $B_j$, each member again of infinite
relativization.

\emph{Case B: $\mathrm{At}(\mathcal B)$ finite (possibly empty).} Let
$e:=\bigvee\mathrm{At}(\mathcal B)$, a finite join. A direct computation
($x=\bigvee_i(x\wedge p_i)$ for $x\leq e$, the $p_i$ the finitely many
atoms) shows ${\downarrow}e$ consists exactly of the sub-joins of the
$p_i$, so it is finite. Since $\mathcal B\cong{\downarrow}e\times
{\downarrow}e^c$ and $\mathcal B$ is infinite, ${\downarrow}e^c$ is
infinite; and since every atom of $\mathcal B$ is $\leq e$,
${\downarrow}e^c$ is atomless. In an atomless algebra every nonzero
$x\leq e^c$ is itself atomless (an atom $\leq x$ would be an atom
$\leq e^c$) and hence, being a nonzero atomless element, never an atom
and always of infinite relativization (a finite nonzero Boolean algebra
is always atomic). So Step~0's peeling, run inside ${\downarrow}e^c$,
produces a countably infinite antichain $\{d_n\}$ below $e^c$, every
member of which is again atomless (hence again infinite), and the same
argument recurses with no case distinction needed. This sub-case uses no
$\sigma$-completeness, only finitely many splits repeated countably
often.

\emph{Step 3: from antichain to partition.} Steps~0--2 produce, below
any given $e\in\mathcal B$ (with $e=1$ at the top level, or $e=B_j$ when
recursing inside ${\downarrow}B_j$), a countably infinite antichain
$\{E_n\}_{n\geq1}$ of pairwise disjoint nonzero elements each of
infinite relativization --- but the antichain need not itself join to
$e$. To repair this, set
\[
s:=\bigvee_nE_n\quad(\text{exists by }\sigma\text{-completeness}),
\qquad
r:=e\wedge s^c,
\]
and define $E_1':=E_1\vee r$, $E_n':=E_n$ for $n\geq2$. Since $r\leq
s^c$ is disjoint from every $E_n\leq s$, and the $E_n$ are themselves
pairwise disjoint, $\{E_n'\}$ is again pairwise disjoint; each $E_n'$ is
nonzero and has infinite relativization (as $E_n'\geq E_n$). Since
$E_n\leq e$ for every $n$, also $s\leq e$, so $e\wedge s=s$ and
\[
\bigvee_nE_n'=r\vee\bigvee_nE_n=r\vee s=(e\wedge s^c)\vee(e\wedge s)=e.
\]
So $\{E_n'\}$ is a genuine countable partition of $e$ into pieces of
infinite relativization.

Applying Step~3 with $e=1$ to the antichain from Step~2 gives the
partition of unity $\{B_j\}_{j\geq1}$. Applying Step~3 again, with
$e=B_j$, to the antichain that Step~2 recurses to inside
${\downarrow}B_j$, gives the partition $\{C_{j,t}\}_{t\geq1}$ of $B_j$.
This is the required grid.
\end{proof}

\begin{remark}
The $\sigma$-completeness hypothesis is doing real work: the Boolean
algebra of finite and cofinite subsets of $\nat$ has infinitely many
atoms (the singletons) but no countable grid, since any pairwise-disjoint
family contains at most one cofinite member (two disjoint cofinite sets
is impossible), so every other member is finite and hence terminal. This
algebra is not $\sigma$-complete: the countable family of odd singletons
$\{1\},\{3\},\{5\},\dots$ has no least upper bound in it (any cofinite
upper bound can be shrunk further while staying cofinite; the set of all
odd numbers, the ``true'' supremum, is neither finite nor cofinite).
\end{remark}

\subsection{From an abstract algebra to a concrete space}

\begin{lemma}\label{lem:clopen-sigma-complete}
If $T$ is basically disconnected, then the Boolean algebra of clopen
subsets of $T$ is $\sigma$-complete.
\end{lemma}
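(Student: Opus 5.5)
Given a countable family $\{E_n\}_{n\geq1}$ of clopen subsets of $T$, the plan is to show that $F:=\mathrm{cl}\bigl(\bigcup_nE_n\bigr)$ is clopen and is the least clopen upper bound of the family. This is the mirror image of Lemma~\ref{lem:sigma-complete-implies-bd}. The key move is to realize $\bigcup_nE_n$ as a cozero set. Put
\[
\varphi=\sum_{n\geq1}2^{-n}\chi_{E_n}.
\]
Each partial sum is continuous by Lemma~\ref{lem:finite-gluing}; the family is not disjoint, but finite sums of continuous functions are continuous, so that is harmless. The tail is bounded by $2^{-N}$, so the partial sums converge uniformly and $\varphi\in\cont{T}$. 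Since every term is nonnegative, $\varphi(x)\neq0$ if and only if $x\in E_n$ for some $n$. Hence $\mathrm{coz}(\varphi)=\bigcup_nE_n$.

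By Definition~\ref{def:basically-disconnected}, $F=\mathrm{cl}(\mathrm{coz}(\varphi))$ is open. It is also closed, hence clopen, and it contains every $E_n$, so it is an upper bound of the family in the clopen algebra.

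For leastness, let $V$ be any clopen set with $E_n\subseteq V$ for all $n$. Then $\bigcup_nE_n\subseteq V$, and since $V$ is closed, $F\subseteq V$. So $F=\bigvee_nE_n$ in the clopen algebra. Countable meets then exist by complementation: $\bigwedge_nE_n=\bigl(\bigvee_nE_n^c\bigr)^c$, although a $\sigma$-complete Boolean algebra only needs countable joins anyway.

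I expect no real obstacle. The only point that needs care is the uniform-convergence step showing $\varphi$ is continuous, together with the check that its cozero set is exactly the union, which uses nonnegativity of the terms. Zero-dimensionality is not even needed for this direction: once basic disconnectedness makes the closure open, the closure is a clopen set.
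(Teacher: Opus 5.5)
Your proposal is correct and follows the paper's proof essentially verbatim. Like the paper, you realize $\bigcup_n E_n$ as the cozero set of the uniformly convergent sum $\sum_n 2^{-n}\chi_{E_n}$, use basic disconnectedness to make its closure clopen, and note that any clopen upper bound, being closed, contains that closure.
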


\begin{proof}
Let $\{E_n\}_{n\geq1}$ be any countable family of clopen subsets of $T$
(no disjointness assumed). Each $\chi_{E_n}\in\cont{T}$ (locally
constant on the two clopen pieces $E_n$, $T\setminus E_n$). Define
$f:=\sum_n2^{-n}\chi_{E_n}$; the partial sums are continuous, and
$\|f-\sum_{n\leq N}2^{-n}\chi_{E_n}\|_\infty\leq\sum_{n>N}2^{-n}\to0$, so
$f$ is a uniform limit of continuous functions, hence $f\in\cont{T}$.
Since $f(x)=\sum_{n:x\in E_n}2^{-n}$ is a sum of strictly positive terms,
$f(x)=0$ iff $x\notin E_n$ for every $n$, so $\mathrm{coz}(f)=\bigcup_nE_n$
exactly: $\bigcup_nE_n$ is a cozero set. Basic disconnectedness then gives
that $\mathrm{cl}(\bigcup_nE_n)$ is open, hence (being a closure) clopen.
Any clopen $F\supseteq E_n$ for every $n$ is closed and contains
$\bigcup_nE_n$, hence contains $\mathrm{cl}(\bigcup_nE_n)$; so
$\mathrm{cl}(\bigcup_nE_n)$ is the least such, i.e.\ it is the supremum of
$\{E_n\}$ in the Boolean algebra of clopen sets.
\end{proof}

\begin{lemma}\label{lem:clopen-infinite}
If $T$ is infinite and zero-dimensional (in particular, if $T$ is
basically disconnected and infinite), then $T$ has infinitely many
clopen subsets.
\end{lemma}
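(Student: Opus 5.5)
The plan is to produce infinitely many pairwise disjoint nonempty clopen subsets of $T$. Any subfamily then has a distinct union, but to show there are infinitely many clopen sets it suffices to have infinitely many distinct ones. I will work with an infinite zero-dimensional $T$; I am implicitly assuming Hausdorffness, as everywhere in the paper (basically disconnected spaces are Tychonoff). The parenthetical case then follows because a basically disconnected space is zero-dimensional (remark after Definition~\ref{def:basically-disconnected}).

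The direct route does not even need disjointness. Since $T$ is infinite, I pick a countably infinite set of distinct points $x_1,x_2,\dots$. For each pair $i\neq j$, Hausdorffness and zero-dimensionality give a clopen set containing $x_i$ but not $x_j$. This is the one step needing care: zero-dimensionality means clopen sets form a base, and $T\setminus\{x_j\}$ is an open neighbourhood of $x_i$ (points are closed in a Hausdorff, indeed $T_1$, space). So some basic clopen $V_{ij}$ satisfies $x_i\in V_{ij}\subseteq T\setminus\{x_j\}$.

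Now suppose the clopen algebra were finite, say with $N$ elements. Consider the map sending each point $x\in T$ to the set of clopen sets containing it. This map takes at most $2^N$ values, so among the infinitely many $x_i$ two distinct points, $x_i$ and $x_j$, lie in exactly the same clopen sets. This contradicts the existence of $V_{ij}$. Hence $T$ has infinitely many clopen subsets.

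I expect no real obstacle. The only point to state explicitly is the use of the $T_1$ property to make $T\setminus\{x_j\}$ open, which is supplied by the standing Hausdorff (compact Hausdorff, or Tychonoff) assumption. The result then feeds into Theorem~\ref{thm:grid-existence} through Lemma~\ref{lem:clopen-sigma-complete}.
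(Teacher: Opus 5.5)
Your proof is correct, and it rests on the same idea as the paper's. If there are only finitely many clopen sets, each point's ``clopen type'' takes only finitely many values; the paper encodes this type as the clopen set $Q_x$, the intersection of all clopens containing $x$. Meanwhile, $T_1$ together with a clopen base separates any two points by a clopen set.

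The difference is in how you finish. The paper shows $Q_x=\{x\}$, so every point is isolated, and then uses compactness: a compact space with all points isolated is finite. Your pigeonhole on the at most $2^N$ types needs only $T_1$ and a clopen base, with no compactness at all. This is a small but real improvement, because the lemma as stated assumes only ``infinite and zero-dimensional.'' The paper could also avoid compactness by noting that $x\mapsto Q_x=\{x\}$ injects $T$ into the finite clopen algebra.

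You are right to flag $T_1$ explicitly: an infinite indiscrete space has a clopen base but only two clopen sets. Your opening sentence about pairwise disjoint clopen sets describes a plan you never use, and you can delete it.
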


\begin{proof}
Suppose toward a contradiction that $T$ has finitely many clopen
subsets. Since clopens form a basis, the whole topology is generated by
a finite collection, hence has at most finitely many open sets. For
$x\in T$, let $Q_x$ be the intersection of all clopen sets containing
$x$; this is a finite intersection, hence itself clopen. If some $y\neq
x$ lay in $Q_x$, Hausdorffness gives disjoint opens $U\ni x$, $V\ni y$,
and since clopens form a basis there is a clopen $E$ with $x\in
E\subseteq U$; then $y\notin E$ (as $E\subseteq U$ and $U\cap
V=\emptyset\ni y$), so $y\notin Q_x$ (as $Q_x\subseteq E$), a
contradiction. So $Q_x=\{x\}$: every singleton is clopen, i.e.\ every
point of $T$ is isolated. A compact space in which every point is
isolated is finite (the cover by singletons has no proper subcover),
contradicting $T$ infinite. Hence $T$ has infinitely many clopen
subsets.
\end{proof}

\subsection{The theorem}

\begin{theorem}\label{thm:full-negative}
For every infinite compact Hausdorff space $T$, the lattice
$\KOp{\Spec{\cont{T}}}$ is not a Heyting algebra; equivalently,
$\Spec{\cont{T}}$ is not an Esakia space. This is a full negative
answer to the open problem of \cite{bezhanishvili-tressl-2026}.
\end{theorem}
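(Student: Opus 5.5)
We argue by contradiction. Suppose $T$ is an infinite compact Hausdorff space for which $\KOp{\Spec{\cont{T}}}$ is a Heyting algebra, i.e.\ $\Spec{\cont{T}}$ is Esakia. The first step imports the external chain recorded at the start of Section~\ref{sec:full-negative}, namely \cite[Thm.~5.6]{bezhanishvili-tressl-2026}. A Heyting algebra is in particular pseudocomplemented, and pseudocomplementedness of $\KOp{\Spec{\cont{T}}}$ forces $T$ to be basically disconnected. So from this point on we may assume $T$ is an infinite, basically disconnected, compact Hausdorff space. This step is where the hypothesis ``Heyting'' has to be traded for a purely topological property of $T$. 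Without it we would have no grid or gluing tools at all, because Lemma~\ref{lem:sigma-gluing} and the B\'ezout property are unavailable for general $T$.

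Next we pass to the clopen algebra. Let $\mathcal B$ be the Boolean algebra of clopen subsets of $T$.
\begin{itemize}
\item By Lemma~\ref{lem:clopen-sigma-complete}, $\mathcal B$ is $\sigma$-complete.
\item Basically disconnected spaces are zero-dimensional (Definition~\ref{def:basically-disconnected}), so Lemma~\ref{lem:clopen-infinite} shows $\mathcal B$ is infinite.
\item $T$ is compact, Hausdorff and zero-dimensional, so Stone duality gives a canonical homeomorphism $T\cong\mathrm{Stone}(\mathcal B)$.
\end{itemize}
Under this homeomorphism $[b]$ corresponds to $b$ itself, and $\cont{T}\cong\cont{\mathrm{Stone}(\mathcal B)}$ as rings. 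Hence $\Spec{\cont{T}}\cong\Spec{\cont{\mathrm{Stone}(\mathcal B)}}$, and the two lattices of compact opens are isomorphic.

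Theorem~\ref{thm:grid-existence} now supplies a countable grid $\{B_j\}$, $\{C_{j,t}\}$ in $\mathcal B$. Theorem~\ref{claim:beyond-betaD} then says that $\KOp{\Spec{\cont{\mathrm{Stone}(\mathcal B)}}}$ is not a Heyting algebra. Transporting this along the isomorphism of the previous paragraph, $\KOp{\Spec{\cont{T}}}$ is not Heyting either, which contradicts our assumption.

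The stated equivalence with ``not Esakia'' is the standard fact that a spectral space is Esakia exactly when its lattice of compact opens is a Heyting algebra. This is the equivalence already used in Section~\ref{sec:problem}, so nothing further needs proving.

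The main obstacle is the first step, because it relies on an external result. I would check that the implication Esakia $\Rightarrow$ basically disconnected in \cite{bezhanishvili-tressl-2026} covers arbitrary compact Hausdorff $T$, with no $F$-space hypothesis built in. If it turns out to presuppose that $T$ is an $F$-space, a fallback is available. In that case $\cont{T}$ is B\'ezout, and Theorem~\ref{thm:bezout-criterion} applies directly. One would then need to show that the Heyting property forces the closure of every cozero set to be open, by computing the pseudocomplement $D(f)\Rightarrow D(0)$ as $D(\mathrm{Ann}(f))$, i.e.\ the set where $\mathrm{int}\,Z(f)$ ``lives''. From there I would derive that $\mathrm{cl}\,\mathrm{coz}(f)$ is clopen via the principal-radical condition.
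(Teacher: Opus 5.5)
Your proposal is correct and is essentially the paper's proof. The paper splits on whether $T$ is basically disconnected. It disposes of the non-basically-disconnected case via \cite[Thm.~5.6]{bezhanishvili-tressl-2026} together with Heyting $\Rightarrow$ pseudocomplemented, and handles the basically disconnected case by Lemmas~\ref{lem:clopen-sigma-complete} and~\ref{lem:clopen-infinite}, Theorem~\ref{thm:grid-existence}, Stone duality and Theorem~\ref{claim:beyond-betaD}; this is exactly your argument by contradiction rearranged as a case split. The paper invokes that cited equivalence for arbitrary compact Hausdorff $T$, with no $F$-space hypothesis, so your fallback sketch is not needed.
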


\begin{proof}
Split on whether $T$ is basically disconnected.

If $T$ is not basically disconnected: by
\cite[Thm.~5.6]{bezhanishvili-tressl-2026}, $T$ basically disconnected is
equivalent to $\Spec{\cont{T}}$ being a PC-space (i.e.\
$\KOp{\Spec{\cont{T}}}$ pseudocomplemented). In any Heyting
algebra, the pseudocomplement $a^\ast:=a\Rightarrow0$ always exists, so
Heyting implies pseudocomplemented; hence Esakia implies PC-space
implies $T$ basically disconnected. Contrapositively, $T$ not basically
disconnected implies $\Spec{\cont{T}}$ not Esakia.

If $T$ is basically disconnected (and, by hypothesis, infinite compact
Hausdorff): by Lemmas~\ref{lem:clopen-sigma-complete}
and~\ref{lem:clopen-infinite}, $\mathcal B$, the Boolean algebra of
clopen subsets of $T$, is an infinite $\sigma$-complete Boolean algebra.
By Theorem~\ref{thm:grid-existence}, $\mathcal B$ admits a countable
grid. By Stone duality for compact Hausdorff zero-dimensional spaces,
$T\cong\mathrm{Stone}(\mathcal B)$. Theorem~\ref{claim:beyond-betaD} then
applies directly, with this $\mathcal B$ and this grid, giving
$\KOp{\Spec{\cont{T}}}$ not a Heyting algebra.

The two cases are exhaustive.
\end{proof}

\section{Examples}

The criterion obtained in Theorem~\ref{thm:bezout-criterion} is nontrivial even for spectra that are not
Stone spaces.

\begin{example}
Let
$
R=\integers.
$
Then \(R\) is a principal ideal domain, hence a B\'ezout ring.
Moreover, for every \(g\in R\),
\[
\sqrt{(g)}=(\operatorname{rad}(g)),
\]
where \(\operatorname{rad}(g)\) denotes the square-free part of \(g\).

Since every ideal of a principal ideal domain is principal, for every
\(f,g\in R\) the ideal
\[
(\sqrt{(g)}:f)
\]
is principal, and therefore has principal radical.
By Theorem~\ref{thm:bezout-criterion},
\[
\KOp{\Spec{\integers}}
\]
is a Heyting algebra.

On the other hand,
$
\Spec{\integers}
$
is not Hausdorff. Indeed, the generic point
$
(0)\in\Spec{\integers}
$
belongs to every nonempty open subset, so the space is not Hausdorff.
\end{example}

\printbibliography

@misc{bezhanishvili-tressl-2026,
      title={Pseudocomplementation in rings of continuous functions}, 
      author={Guram Bezhanishvili and Marcus Tressl},
      year={2026},
      eprint={2603.28165},
      archivePrefix={arXiv},
      primaryClass={math.GN},
      url={https://arxiv.org/abs/2603.28165}, 
}

@book{GillmanJerison,
  author    = {Leonard Gillman and Meyer Jerison},
  title     = {Rings of Continuous Functions},
  publisher = {Springer-Verlag},
  address   = {New York},
  series    = {Graduate Texts in Mathematics},
  volume    = {43},
  year      = {1976},
  edition   = {Reprint of the 1960 edition}
}

@article{GillmanHenriksen1956,
  author  = {Leonard Gillman and Melvin Henriksen},
  title   = {Rings of Continuous Functions in Which Every Finitely Generated Ideal Is Principal},
  journal = {Transactions of the American Mathematical Society},
  volume  = {82},
  number  = {2},
  pages   = {366--391},
  year    = {1956}
}

\end{document}